\newtheorem{theorem}{Theorem}[section]
\newtheorem{lemma}[theorem]{Lemma}
\theoremstyle{definition}
\newtheorem{example}[theorem]{Example}
\newtheorem{proposition}[theorem]{Proposition}
\newtheorem{corollary}[theorem]{Corollary}
\newtheorem{conjecture}[theorem]{Conjecture}
\newtheorem{question}[theorem]{Question}
\newtheorem{fact}{Fact}  
\theoremstyle{remark}
\newtheorem{remark}{Remark}[section]
\newcommand{\be}{\begin{enumerate}}
\newcommand{\ee}{\end{enumerate}}
\newcommand{\bq}{\begin{question}}
\newcommand{\eq}{\end{question}}
\newcommand{\bcj}{\begin{conjecture}}
\newcommand{\ecj}{\end{conjecture}}
\newcommand{\bc}{\begin{corollary}}
\newcommand{\ec}{\end{corollary}}
\newcommand{\bl}{\begin{lemma}}
\newcommand{\el}{\end{lemma}}
\newcommand{\btl}{\begin{technicalLemma}}
\newcommand{\etl}{\end{technicalLemma}}
\newcommand{\bt}{\begin{theorem}}
\newcommand{\et}{\end{theorem}}
\newcommand{\bp}{\begin{proposition}}
\newcommand{\ep}{\end{proposition}}
\newcommand{\bft}{\begin{fact}}
\newcommand{\eft}{\end{fact}}
\newcommand{\brk}{\begin{remark}}
\newcommand{\erk}{\end{remark}}
\newcommand{\bd}{\begin{Dn}}
\newcommand{\ed}{\end{Dn}}
\newcommand{\coker}{{\rm Coker\ }}
\numberwithin{equation}{section}
\newcommand{\bea} {\begin{eqnarray*}}
\newcommand{\beq} {\begin{equation}}
\newcommand{\bey} {\begin{eqnarray}}
\newcommand{\eea} {\end{eqnarray*}}
\newcommand{\eeq} {\end{equation}}
\newcommand{\eey} {\end{eqnarray}}
\begin{document}
\title[Reidemeister spectrum for  metabelian groups ]{Reidemeister spectrum for  metabelian groups of the form 
 ${Q}^n\rtimes \mathbb Z$ and ${\mathbb Z[1/p]}^n\rtimes \mathbb Z$, $p$ prime. }
\author{Alexander Fel'shtyn}
\address{ Instytut Matematyki, Uniwersytet Szczecinski,
ul. Wielkopolska 15, 70-451 Szczecin, Poland
and Boise State University, 1910
University Drive, Boise, Idaho, 83725-155, USA }
\email{felshtyn@diamond.boisestate.edu, felshtyn@mpim-bonn.mpg.de}
\author {Daciberg L. Gon\c{c}alves}
\address{Dept. de Matem\'atica - IME - USP, Caixa Postal 66.281 - CEP 05314-970,
S\~ao Paulo - SP, Brasil}
\email{dlgoncal@ime.usp.br}

\begin{abstract}

 In this note we study the Reidemeister spectrum for metabelian
groups of the form  ${\mathbb Q}^n\rtimes \mathbb Z$ and ${\mathbb Z[1/p]}^n\rtimes \mathbb Z$. Particular attention is given to  the $R_{\infty}$ property of a subfamily of these groups. We also define a Nielsen spectrum of a space and discuss some  examples.

\end{abstract}

\date{\today}
\keywords{Reidemeister spectrum, twisted conjugacy classes, metabelian groups,  divisible groups,
 $R_\infty$ property}
\subjclass{20E45;37C25; 55M20}
\maketitle

\section{Introduction} 

Let $\phi:G\to G$ be an automorphism of a group $G$. A class of equivalence  defined  by the relation
$x\sim gx\phi(g^{-1})$ for  $x,g \in G$ is called the \emph{Reidemeister class of}   $\phi$
(or the
$\phi$-\emph{conjugacy class} or {the} \emph{twisted conjugacy class of} $\phi$). The
number  of Reidemeister classes, denoted by  $R(\phi)$,  is called
the \emph{Reidemeister number}
of $\phi$.
The interest in twisted conjugacy relations has its origins, in particular,
in the Nielsen-Reidemeister fixed point theory (see \cite{J,FelshB}),
in Selberg theory (see  \cite{Shokra,Arthur}), and  Algebraic Geometry
(see  \cite{Groth}).
A current important  problem of the field concerns obtaining
a twisted analogue of the  Burnside-Frobenius theorem
\cite{FelHill,FelshB,FelTro,FelTroVer,ncrmkwb,polyc,FelTroObzo},
i.e.,  to show the
equality of 
the Reidemeister number of $\phi$ and the number of fixed points of the
induced homeomorphism  of an appropriate dual object.
One step in this process is
to describe the class of groups $G$  such that $R(\phi)=\infty$
for
any automorphism $\phi:G\to G$.

The work of discovering which groups belong to the mentioned class of
groups was begun by Fel'shtyn and Hill in
\cite{FelHill}.
Later, it was shown by various authors
that the  following groups belong to this class:
(1) non-elementary Gromov hyperbolic groups \cite{ll,FelPOMI};
(2) Baumslag-Solitar groups $BS(m,n) = \langle a,b | ba^mb^{-1} = a^n \rangle$
except for $BS(1,1)$ \cite{FelGon};
(3) generalized Baumslag-Solitar groups, that is, finitely generated groups
which act on a tree with all edge and vertex stabilizers being  infinite cyclic
 \cite{LevittBaums},
(4) lamplighter groups $\mathbb Z_n \wr \mathbb Z$ iff $2|n$ or $3|n$  \cite{gowon1};
(5) the solvable generalization $\Gamma$ of $BS(1,n)$ given by the short exact sequence
$1 \rightarrow \mathbb Z\left[\frac{1}{n}\right] \rightarrow \Gamma \rightarrow \mathbb Z^k \rightarrow 1$,
 as well as any group quasi-isometric to $\Gamma$  \cite{TabWong},
groups which are quasi-isometric to $BS(1,n)$ \cite{TabWong2} (while this property is
 not a quasi-isometry invariant); (6) the 
 R.~Thompson group $F$ \cite{bfg}; (7) saturated weakly branch groups including the Grigorchuk group and the Gupta-Sidki group \cite{FelLeoTro};
(8) mapping class groups, symplectic groups and braids groups \cite{dfg};
(9) relatively hyperbolic groups(in particular the  free products of finitely many finitely generated
groups) \cite{f07};
(10)  some classes of finitely generated free nilpotent groups \cite{gw07,ro}
and some classes of finitely generated  free solvable groups \cite{kuro};
(11) some classes of crystallographic groups \cite{DePe}.

The paper \cite{TabWong} suggests a terminology for this property, which we would like to follow.
Namely, ``a group $G$ has \emph{property } $R_\infty$
or is a $R_\infty$ group'', 
if all of its automorphisms  $\phi$
have  $R(\phi)=\infty$.

For an immediate consequences of the $R_\infty$ property for the topological fixed point theory see, e.g., \cite{TabWong2}.

  Following \cite{kuro},   we  define the {\it Reidemeister spectrum of a
 group} $G$, denoted by $Spec(G)$, as the set of 
  natural numbers  $k$ such  that there is an automorphism $\phi\in Aut(G)$ with   $ R(\phi)=k$  ($k$ can be infinite).
  In terms of the spectrum, the $R_{\infty}$-property of the group $G$ simply  means  that 
$Spec(G)$ contains only  one element which is the infinity.

 It is easy to see that  $Spec({\mathbb Z}) = \{2\} \cup \{ \infty \}$, 
and,  for $n \geq 2$,  the spectrum is full, i.e.
$ Spec({\mathbb Z}^n) = {\mathbb N} \cup \{\infty \}$.
Let $N = N_{rl}$ be the free nilpotent group of rank $r$ and class
$l$. Then for $N_{22}$ (also known as  discrete Heisenberg group)  $Spec(N_{22}) = 2{\mathbb N} \cup \{\infty \}$
\cite{ind,fit, kuro}.   It is also  known that  $ Spec(N_{23}) = \{ 2k^2 | k \in {\mathbb N}\} \cup
\{\infty \}$ \cite{kuro} and 
$ Spec (N_{32}) = \{2n - 1| n \in {\mathbf N}\} \cup \{
4n | n \in {\mathbf N}\} \cup \{ \infty \}$ \cite{kuro}.

 Let $X=L(m,q_1,\ldots,q_r)$ be a generalized lens space,
and let $f:X\to X$ be  a continuous map of degree 
$d$,  where $\mid d \mid\not= 1$. Let  $f_*: \pi_1(X)\to \pi_1( X)$ be induced homomorphism on the fundamental group 
$\pi_1(X)=\mathbb Z/m\mathbb Z$.  
In   1943   Franz \cite{Franz} has observed that
 $N(f)=R(f)= R(f_*) = \#\coker(1-f_*)=\#(\mathbb Z/m\mathbb Z)/(1-d)(\mathbb Z/m\mathbb Z)= (|1-d|,m)$,
 where $N(f)$ and $R(f)$ are Nielsen and Reidemeister numbers of the map $f$
and where $(|1-d|,m)$ denotes  the gcd of $|1-d|$ and $m$. This gives a strong arithmetic  restriction on the Reidemeister spectrum for endomorphisms of the group $\mathbb Z/m\mathbb Z$.
We observe that  the knowledge of the Reidemeister spectrum of a group  can be quite useful for fixed point theory. 

 In this paper 
 we study the Reidemeister spectrum and the  $R_{\infty}$-property
 for a subfamily of the family of the metabelian groups of the form
${\mathbb Q}^n\rtimes  \mathbb Z$ and ${\mathbb Z[1/p]}^n\rtimes \mathbb Z$ for $p$ a prime. 

We also define a Nielsen spectrum of a space and discuss some  examples.
\medskip\noindent
{\bf Acknowledgement.}
We would like to thank the Max-Planck-Institut f\"ur Mathematik (MPIM)
in Bonn for its kind support and
hospitality while most  of this work was done.

\section{ Preliminaries}

In this section we show that for certain short exact sequences of groups   the kernel is characteristic. Then   we compute  
$Aut(\mathbb Q)$, $Aut(\mathbb Z[1/p])$,  and the Reidemeister spectrum of the groups  $\mathbb Q$ and  $\mathbb Z[1/p]$  where 
$\mathbb Q$ denote the rational numbers and $p$ a prime.

Let us consider a short exact sequence of the form
$ 1 \to \mathbb K \to G \to \mathbb Z \to 1$, which of course splits.

\begin{lemma} Suppose  the group $\mathbb K$ has the property that for every $x\in \mathbb K$ there is a natural number $s>1$ such that 
$x$ is divisable by $s$ $($i.e. there is $y\in \mathbb K$ such that $y^s= x$ $)$. Then $\mathbb K$ is characteristic in $G$.
\end{lemma}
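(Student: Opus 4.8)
The plan is to give an intrinsic, automorphism-invariant description of $\mathbb{K}$ inside $G$ in terms of divisibility of elements, so that membership in $\mathbb{K}$ becomes a property preserved by every automorphism. Since the sequence splits, write $G=\mathbb{K}\rtimes\mathbb{Z}$ and let $\pi\colon G\to\mathbb{Z}$ be the projection with kernel $\mathbb{K}$. For $g\in G$ and a natural number $s$, say that $g$ is \emph{$s$-divisible} if $g=y^{s}$ for some $y\in G$, and let $S\subseteq G$ denote the set of elements that are $s$-divisible for arbitrarily large $s$. The whole argument reduces to the single claim $S=\mathbb{K}$; once this is established the conclusion is immediate, because any automorphism $\phi$ satisfies $\phi(y)^{s}=\phi(y^{s})$, so $\phi$ carries $s$-divisible elements to $s$-divisible elements and hence $\phi(S)=S$ (applying the same to $\phi^{-1}$ gives the reverse inclusion).

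For the inclusion $\mathbb{K}\subseteq S$ I would iterate the hypothesis. Given $x\in\mathbb{K}$, choose $s_1>1$ and $y_1\in\mathbb{K}$ with $y_1^{s_1}=x$; since $y_1\in\mathbb{K}$ again, choose $s_2>1$ and $y_2\in\mathbb{K}$ with $y_2^{s_2}=y_1$, and so on. After $k$ steps one obtains $y_k\in\mathbb{K}$ with $y_k^{\,s_1s_2\cdots s_k}=x$, and since each $s_i\ge 2$ the exponent $s_1\cdots s_k\ge 2^{k}$ grows without bound. Thus $x$ is $s$-divisible for arbitrarily large $s$, i.e. $x\in S$.

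For the reverse inclusion $S\subseteq\mathbb{K}$ I would use the quotient $\mathbb{Z}$, whose only infinitely divisible element is $0$. If $g\notin\mathbb{K}$ then $n:=\pi(g)\neq 0$, and whenever $g=y^{s}$ we get $s\,\pi(y)=\pi(g)=n$ in $\mathbb{Z}$, forcing $s\mid n$; hence $g$ is $s$-divisible only for the finitely many divisors $s$ of $n$, all bounded by $|n|$, so $g\notin S$. Combining the two inclusions yields $S=\mathbb{K}$, and the automorphism-invariance of $S$ noted above shows that $\mathbb{K}$ is characteristic. I do not expect any serious obstacle; the only point requiring care is the passage from the hypothesis ``divisible by some $s>1$'' to ``divisible by arbitrarily large $s$'', which is exactly what the iteration in the second paragraph supplies, together with the observation that a power $y^{s}$ landing in $\mathbb{K}$ forces $\pi(y)$ to solve $s\,\pi(y)=\pi(g)$ in $\mathbb{Z}$.
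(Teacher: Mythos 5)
Your proof is correct and takes essentially the same approach as the paper: both arguments rest on the facts that iterating the hypothesis makes every element of $\mathbb{K}$ divisible by arbitrarily large integers, while the $\mathbb{Z}$-coordinate of an $s$-th power is divisible by $s$, so an element with nonzero $\mathbb{Z}$-coordinate admits only finitely many such $s$. The only difference is packaging: the paper writes $\phi(x)=(z,r)$ and concludes $r=0$ because $r$ would have infinitely many divisors, whereas you encode the same two facts as the automorphism-invariant characterization $\mathbb{K}=S$, and in doing so you make explicit the iteration step that the paper's proof only asserts.
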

\begin{proof} Let $\phi:G \to G$ be an automorphism. We know that $G\approx K\rtimes_{\theta}\mathbb Z$ for some homomorphism $\theta:\mathbb Z \to Aut(\mathbb K)$.
 Let $x\in \mathbb K$ and $\phi(x)=(z,r)$. We will
 show that $r=0$. It follows from the definition of the operation on the semi-direct product  that,  if $s$ divides $x$,  then $s$ also divides   
$\phi(x)=(z,r)$.
Again,  by the definition of the operation on the semi-direct product,   $s$  divides $r$.   From the hypothesis it  follows that there is 
an infinite sequence of positive integers  
such that  for  each integer $s$ of the sequence, $r$ is divisible by  this integer. Therefore, $r$ has an infinite number of divisors and must 
be zero.  
\end{proof}

For the rational numbers regarded as an abelian group with the additive operation we have:

\begin{lemma} The group of automorphims of  $\mathbb Q$ is isomorphic to the multiplicative group of the  rationals  different from zero,  denoted by $\mathbb Q^*$.
 \end{lemma}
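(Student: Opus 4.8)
The plan is to exhibit an explicit isomorphism by sending each automorphism to its value at $1$. First I would observe that $\mathbb Q$, as an additive group, is determined as a divisible hull of the cyclic group generated by $1$: every element is a $\mathbb Q$-multiple of $1$. So the natural candidate is the evaluation map $\Psi\colon Aut(\mathbb Q)\to \mathbb Q^*$ given by $\Psi(\phi)=\phi(1)$, and the heart of the argument is to show this is a well-defined bijective homomorphism.

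The key step is to prove that every additive endomorphism $\phi$ of $\mathbb Q$ is multiplication by the scalar $q:=\phi(1)$. For an integer $n$ this is immediate from additivity, since $\phi(n)=n\phi(1)=nq$. For a general fraction $m/n$ with $n\geq 1$ I would use torsion-freeness together with divisibility: from $n\cdot\phi(m/n)=\phi(m)=mq$ and the injectivity of multiplication by $n$ on $\mathbb Q$, it follows that $\phi(m/n)=(m/n)q$. Hence $\phi$ is exactly the map $x\mapsto qx$, and $\phi$ is recovered from the single value $q$.

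Next I would pin down which scalars yield automorphisms. Multiplication by $q$ is invertible, with inverse multiplication by $1/q$, precisely when $q\neq 0$; so $\phi\in Aut(\mathbb Q)$ forces $q\in\mathbb Q^*$, and conversely each $q\in\mathbb Q^*$ gives an automorphism. This makes $\Psi$ a bijection onto $\mathbb Q^*$. Finally, composition of automorphisms corresponds to multiplication of scalars: if $\phi,\psi$ are multiplication by $q,q'$ respectively, then $\phi\circ\psi$ is multiplication by $qq'$, so $\Psi(\phi\circ\psi)=\Psi(\phi)\Psi(\psi)$. Thus $\Psi$ is a group isomorphism $Aut(\mathbb Q)\cong\mathbb Q^*$.

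I expect no serious obstacle; the only point that genuinely needs care is the well-definedness in the second step, namely that the value of $\phi$ on all of $\mathbb Q$ is forced by its value on $1$. This rests precisely on $\mathbb Q$ being a torsion-free divisible abelian group, i.e. a one-dimensional $\mathbb Q$-vector space, so that every additive self-map is automatically $\mathbb Q$-linear and hence scalar multiplication.
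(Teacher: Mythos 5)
Your proposal is correct and follows essentially the same route as the paper: both show an automorphism is determined by its value at $1$ (using torsion-freeness/unique divisibility to force $\phi(m/n)=(m/n)\phi(1)$) and observe conversely that multiplication by any nonzero rational is an automorphism with inverse given by multiplication by the reciprocal. Your write-up is slightly more complete in that you explicitly verify the evaluation map is a group homomorphism, a step the paper leaves implicit.
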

\begin{proof} First,   we can  observe that an automorphism $\phi: \mathbb Q \to \mathbb Q$ is determined if we know  its  value  at 1. 
To see  this  let us consider an arbitrary element $p/q  \in \mathbb Q$. Then $\phi(p/q)=p\phi(1/q)$ and  it suffices to determine     
$\phi(1/q)$. Since $\mathbb Q$ is torsion free, it  follows that the divisibility in $\mathbb Q$ is unique and hence  $\phi(1/q)$ is uniquely 
determined. Conversely,  a multiplication by any rational number different from zero provides an automorphism, since the  multiplication by the inverse number  provides the inverse 
homomorphism. 
 \end{proof}

Now  let    $\mathbb Z[1/p]$ for $p$ a prime be the subring of the rationals and also by the same notation we denote the abelian additive group. 
By  $Aut(\mathbb Z[1/p])$  we mean   the automorphism group  of $\mathbb Z[1/p]$  as abelian group.

\begin{lemma} The group  $Aut(\mathbb Z[1/p])$ is isomorphic to the multiplicative set  of the elements  of $\mathbb Z[1/p]$  generated by $\{\pm p\}$, which is isomorphic 
to the group $\mathbb Z+\mathbb Z/2\mathbb Z$.
 \end{lemma}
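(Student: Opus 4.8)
The plan is to reduce the computation to determining the multiplicative group of units of the ring $\mathbb{Z}[1/p]$, mirroring the strategy used for $\mathbb{Q}$ in the previous lemma. First I would argue, exactly as there, that any automorphism $\phi$ of the additive group $\mathbb{Z}[1/p]$ is completely determined by the single value $a := \phi(1)$. Indeed, since $\mathbb{Z}[1/p]$ is torsion free, divisibility is unique, so from $p^{n}\phi(1/p^{n}) = \phi(1) = a$ we get $\phi(1/p^{n}) = a/p^{n}$ for every $n \geq 0$; additivity then forces $\phi(m/p^{n}) = m a/p^{n}$, i.e. $\phi$ is simply multiplication by $a$. In particular $a = \phi(1) \in \mathbb{Z}[1/p]$, and $a \neq 0$ since $\phi$ is injective.

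Next I would pin down exactly which elements $a$ arise. Because $\phi$ is multiplication by $a$, surjectivity of $\phi$ means $a\cdot \mathbb{Z}[1/p] = \mathbb{Z}[1/p]$; since $1 \in \mathbb{Z}[1/p]$ this forces $a^{-1} = \phi^{-1}(1) \in \mathbb{Z}[1/p]$ as well. Hence $a$ lies in the group $U$ of units of the ring $\mathbb{Z}[1/p]$, and conversely every unit $a$ yields an automorphism (multiplication by $a$, with inverse multiplication by $a^{-1}$). The assignment $\phi \mapsto \phi(1)$ is then a homomorphism $\Aut(\mathbb{Z}[1/p]) \to U$, respecting composition precisely because composing two multiplications multiplies the scalars, and the first paragraph shows it is a bijection. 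Thus $\Aut(\mathbb{Z}[1/p]) \cong U$.

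It then remains to identify $U$. Writing a nonzero $a \in \mathbb{Z}[1/p]$ in the normal form $a = \pm p^{k} m$ with $k \in \mathbb{Z}$ and $m$ a positive integer coprime to $p$ (this is just unique factorization in $\mathbb{Z}$ applied to numerator and denominator, after cancelling the powers of $p$), the requirement $a^{-1} \in \mathbb{Z}[1/p]$ forces $m^{-1} \in \mathbb{Z}[1/p]$, which is possible only if $m = 1$. Hence $U = \{\pm p^{k} : k \in \mathbb{Z}\}$, which is exactly the multiplicative subset of $\mathbb{Z}[1/p]$ generated by $\{\pm p\}$. Finally I would exhibit the isomorphism with $\mathbb{Z} + \mathbb{Z}/2\mathbb{Z}$ by sending $\varepsilon\, p^{k} \mapsto (k,\varepsilon)$, with $\varepsilon \in \{\pm 1\}$ identified with $\mathbb{Z}/2\mathbb{Z}$: the exponent $k$ adds while the sign $\varepsilon$ multiplies, so this map is a group isomorphism.

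I expect the only genuine content to lie in the unit computation of the third paragraph; the remainder is the uniqueness-of-divisibility bookkeeping already carried out for $\mathbb{Q}$. The one mild subtlety is to argue carefully that $a$ and $a^{-1}$ both lying in $\mathbb{Z}[1/p]$ leaves no room for any prime other than $p$ in the factorization, which is exactly what pins down $m = 1$ and hence the shape $a = \pm p^{k}$.
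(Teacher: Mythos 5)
Your proposal is correct and follows essentially the same route as the paper's proof: an automorphism of the additive group $\mathbb{Z}[1/p]$ is multiplication by its value at $1$, that value must be a unit of the ring $\mathbb{Z}[1/p]$, and writing elements in reduced form shows the units are exactly $\{\pm p^{k} : k \in \mathbb{Z}\} \cong \mathbb{Z} \oplus \mathbb{Z}/2\mathbb{Z}$. Your version merely fills in details (unique divisibility, the unit computation, the explicit final isomorphism) that the paper leaves terse or implicit.
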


\begin{proof} The first part is similar to the proof of the  previous Lemma. Namely, a homomorphism $\phi$ is determined by the value of the homomorphism  at 1, and  it is multiplication by this value.  In order to have $\phi$ be an automorphism  we need that $\phi(1)$ is invertible. Let $r/s\in \mathbb Z[1/p]$ where $r/s$ is written in the reduced form. If $r/s\in \mathbb Z[1/p]$,  then we have either $r=1$ and $s=p^t$ or   $s=1$ and $r=p^t$,  and the result follows.
 \end{proof}

Now we  determine the Reidemeister  spectrum of $\mathbb Q$ and $\mathbb Z[1/p]$. We need a Lemma which will also  be used  for the computation
of the Reidemeister spectrum of other groups.
Given  any $x\in \mathbb Z[1/p]$, $x$  can be written uniquely in the form $\pm q/p^n$ where $n\in \mathbb Z$, $q\in \mathbb N$(the natural numbers) and  $p,q$  relatively prime.
Denote this number $q$ by $v_{P}(x)$. Here $P$ is the set of all primes relatively prime with $p$. Also let us observe that the abelian group 
$\mathbb Z[1/p]^n$ is  a  free $\mathbb Z[1/p]-$module. Then  a group homomorphism of the abelian 
group  $\mathbb Z[1/p]^n$ is always a homomorphism of $\mathbb Z[1/p]-$module.

\begin{lemma} Let $\psi: A^n \to A^n$ be a homomorphism where $A$ is either $\mathbb Q$ or $\mathbb Z[1/p]$. Then 
$\psi$ is invertible if, and only if,  the determinant of the matrix of $\psi$ is
invertible. Furthermore, for $A=\mathbb Q$  the cardinality of the cokernel of $\psi$ is infinite if 
$\det(\phi)=0$,  and it is  1 if $\det(\psi)\ne 0$.  For $A=\mathbb Z[1/p]$ the cardinality 
of the cokernel of $\psi$  is the natural number   $v_{P}(x)$, defined above, for  $x=\det(\psi)$, if $x\ne 0$.
If $\det(\psi)=0$ then the cardinality of the cokernel is infinite.
\end{lemma}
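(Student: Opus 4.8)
The plan is to treat both coefficient rings uniformly as principal ideal domains and to read the cokernel off a diagonalization of the matrix of $\psi$. Both $\mathbb{Q}$ (a field) and $\mathbb{Z}[1/p]$ (the localization of $\mathbb{Z}$ at the multiplicative set $\{p^k\}_{k\ge 0}$, hence a PID) are principal ideal domains, and by the remark preceding the statement $\psi$ is an $A$-module homomorphism of the free module $A^n$, so it is given by a square matrix $M$ over $A$. For the first assertion I would invoke the standard fact that over a commutative ring a square matrix is invertible if and only if its determinant is a unit: the adjugate formula produces a two-sided inverse when $\det(M)$ is a unit, and conversely $\det(M)\det(M^{-1})=1$ forces $\det(M)$ to be a unit. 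Over $\mathbb{Q}$ the units are exactly the nonzero elements, and over $\mathbb{Z}[1/p]$ the units are $\{\pm p^{k}\}$ by the earlier computation of $\Aut(\mathbb{Z}[1/p])$, so this reformulates invertibility in each case.

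For $A=\mathbb{Q}$ the cokernel statement is pure linear algebra: if $\det(\psi)\neq 0$ then $\psi$ is an isomorphism, so $\Coker\psi=0$ has cardinality $1$; and if $\det(\psi)=0$ then $\Im\psi$ is a proper subspace of $\mathbb{Q}^n$, whence the quotient is a nonzero $\mathbb{Q}$-vector space and therefore infinite.

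For $A=\mathbb{Z}[1/p]$ I would put $M$ into Smith normal form $M=UDV$ with $U,V$ invertible over $A$ and $D=\diag(d_1,\dots,d_n)$, so that $\Coker\psi\cong\bigoplus_{i=1}^{n} A/(d_i)$ and $\det(\psi)=(\text{unit})\cdot\prod_i d_i$. If some $d_i=0$, equivalently $\det\psi=0$, the corresponding summand is all of $A$ and the cokernel is infinite. Otherwise, since $p$ is a unit in $A$, I may discard the $\pm p^{k}$ factor of each $d_i$ and replace $(d_i)$ by $(q_i)$, where $q_i:=v_P(d_i)\in\mathbb{N}$ is coprime to $p$, giving $A/(d_i)\cong\mathbb{Z}[1/p]/(q_i)$.

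The \emph{crux}, and what I expect to be the one genuinely technical point, is the computation $\mathbb{Z}[1/p]/(q)\cong\mathbb{Z}/q\mathbb{Z}$ whenever $\gcd(p,q)=1$. I would establish it by analyzing the composite $\mathbb{Z}\to\mathbb{Z}[1/p]\to\mathbb{Z}[1/p]/(q)$: it is surjective because $p$ is invertible modulo $q$, so each $1/p^{n}$ is congruent modulo $(q)$ to an integer; and its kernel is exactly $q\mathbb{Z}$, since $q\mid m p^{n}$ together with $\gcd(p,q)=1$ forces $q\mid m$. Granting this, $|A/(d_i)|=q_i=v_P(d_i)$, hence $|\Coker\psi|=\prod_i v_P(d_i)$. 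Finally, since $v_P$ is multiplicative and takes the value $1$ on units, $v_P(\det\psi)=v_P\!\left(\prod_i d_i\right)=\prod_i v_P(d_i)=|\Coker\psi|$, which is precisely the asserted formula.
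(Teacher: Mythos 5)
Your proof is correct, and for the first assertion and the case $A=\mathbb{Q}$ it coincides with the paper's (adjugate formula; isomorphism-or-infinite-cokernel for a linear map of $\mathbb{Q}$-vector spaces). For $A=\mathbb{Z}[1/p]$, however, you take a genuinely different route. The paper never diagonalizes over $A$: it composes $\psi$ with multiplication by a large power of $p$ --- an automorphism of $A^n$, which changes neither the cokernel nor $v_P$ of the determinant --- so that the matrix has integer entries; it then compares the cokernel of that matrix acting on $\mathbb{Z}^n$ with its cokernel acting on $A^n$ by tensoring both exact sequences with $\mathbb{Z}[1/p]$ over $\mathbb{Z}$, and invokes the structure of finitely generated abelian groups: an infinite cokernel over $\mathbb{Z}$ contains a copy of $\mathbb{Z}$ and stays infinite after localization, while a finite one splits into a $p$-part, which is killed, and a prime-to-$p$ part, which survives and has order $v_P(\det)$. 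You instead stay inside the PID $\mathbb{Z}[1/p]$, apply Smith normal form there, and replace the paper's localization step by the single computation $\mathbb{Z}[1/p]/(q)\cong\mathbb{Z}/q\mathbb{Z}$ for $\gcd(p,q)=1$, which you prove correctly (surjectivity of $\mathbb{Z}\to\mathbb{Z}[1/p]/(q)$ because $p$ is invertible modulo $q$; kernel $q\mathbb{Z}$ because $q\mid mp^{n}$ and $\gcd(p,q)=1$ force $q\mid m$). Your route is more self-contained: it needs no flatness or exactness of tensor products and no passage through $\mathbb{Z}$, and the identity $\#\Coker\psi=v_P(\det\psi)$ drops out of multiplicativity of $v_P$ together with $v_P$ being $1$ on units. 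The paper's route trades Smith normal form over the less familiar ring $\mathbb{Z}[1/p]$ for standard facts about cokernels of integer matrices, at the price of the scaling trick and the tensor-product argument. Both arguments ultimately rest on the same two observations: the units of $\mathbb{Z}[1/p]$ are exactly $\pm p^{k}$, and $v_P$ is multiplicative and insensitive to such units.
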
  
\begin{proof} That $\psi$ is invertible if and only if $det(\psi)$ is invertible is a classical fact  since $A$ is a 
commutative ring. Therefore if $A=\mathbb Q$ then $det(\psi)\ne 0$ implies $\psi$ invertible and we have the cardinality of the cokernel
1. If $det(\psi)=0$ then the result follows promptly from the fact that  $\mathbb Q$ is an infinite field.

  Let $A=\mathbb Z[1/p]$. 
The matrix $M$ of $\psi$ has entries in $A$. Multiplication by $p^l$ on $A^n$, denoted by $R_{p^l}:A^n\to A^n$, is an automorphism 
for any $l$. So the cokernel of $\psi$ and the cokernel of the  composite of $\psi$ with  the multiplication 
by  $p^l$ are isomorphic.  For a sufficiently  large $l$ we can assume that the matrix of the composite has entries of
the form $xp^i$ for $x$ an  integer(possibly negative) relatively prime with $p$  and $i\geq 0$.  Observe that the determinant of 
the new matrix
$M_1$ is $p^ldet(\psi)$ and  $ v_P$ of the two determinants are the same. Now consider  the homomorphism 
$\psi_1:\mathbb Z^n \to \mathbb  Z^n$ defined by $M_1$ and $\psi_2=R_{p^l}\circ \psi: A^n \to A^n$ the homomorphism defined by the same matrix.

Then we have the following commutative diagram:

\begin{equation*}
\xymatrix{ 
 \ar[r]  & \mathbb Z^n \ar[r]^-{\psi_1} \ar[d]_{\iota} & \mathbb Z^n 
\ar[r]^-{\pi_1} \ar[d]_{\iota} &  coker(\psi_1)  \ar[r] \ar[d] & 0\\
 \ar[r] & \mathbb Z[1/p]^n \ar[r]^-{\psi_2} & \mathbb Z[1/p]^n \ar[r]^-{\pi_2} & coker(\psi_2)   \ar[r] & 0}
\end{equation*}

\noindent where  $coker(\psi_2)$ has no $p$ torsion, as result of the $p-$divisibility of the group 
$\mathbb Z[1/p]$. After we take the tensor product with $\mathbb Z[1/p]$ as  $\mathbb Z-$module the two first vertical 
homomoprhisms becomes  isomorphisms  and the  horizontal lines are  exact. So we 
have an isomorphism between the tensor product of the cokernels. The  cokernel $coker(\psi_1)$  is a finitely generated  abelian group. 
Now we look at the two possible cases.  If  $coker(\psi_1)$ is infinite
then it contains  a copy of $\mathbb Z$  and it  follows that  $coker(\psi_2)$ is also infinite.  If   $coker(\psi_1)$ is finite
then it is  the direct product of two  finite groups where the first has order a power of $p$ and the other has order 
relatively prime to $p$.  After taking  the tensor product we obtain only the finite subgroup of order relatively prime 
to $p$ which is simultaneously the order of the cokernel  of $\psi_2$ and $v_P(det(\psi_1))$. But  
$v_P(det(\psi_1))=v_P(det(\phi_2))=v_P(det(\psi))$ and the result follows.

 The case when $A=\mathbb Q$ is simpler and we leave to the reader.  
\end{proof}

\begin{proposition} a) The Reidemeister spectrum of $\mathbb Q$ is\\ $Spec(\mathbb Q)=\{1\}\cup\{\infty\}$. \\
b) The Reidemeister spectrum of $\mathbb Z[1/p]$ for $p$ an odd  prime is 
$Spec(\mathbb Z[1/p])=\{p^l+1, \  p^{l+1}-1 | l\geq 0 \}  \cup  \{\infty\}$ \\
  and  $Spec(\mathbb Z[1/2])=\{2^l+1, \  2^{l}-1 | l\geq 1\} \cup \{\infty\}$ \\
\end{proposition}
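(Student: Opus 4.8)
The plan is to reduce the whole computation to the abelian Reidemeister formula and then feed in the three structural lemmas already proved. First I would record the standard fact that for an abelian group $A$ written additively the twisted conjugacy relation $x\sim g+x-\phi(g)$ identifies each Reidemeister class with a coset of $\mathrm{Im}(1-\phi)$, so that $R(\phi)=|\Coker(1-\phi)|$, where $1-\phi$ is the endomorphism $x\mapsto x-\phi(x)$. Since $\mathbb{Q}$ and $\mathbb{Z}[1/p]$ are rank-one (so $n=1$), computing the spectrum amounts to running over all $\phi\in Aut(A)$ and evaluating $|\Coker(1-\phi)|$, which is exactly the quantity controlled by the cokernel Lemma in the case $n=1$, with $1\times 1$ matrix $[1-c]$ when $\phi$ is multiplication by $c$.

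For part (a) I would invoke the Lemma identifying $Aut(\mathbb{Q})$ with $\mathbb{Q}^*$: every automorphism is multiplication by some $q\neq 0$, so $1-\phi$ is multiplication by $1-q$. Applying the cokernel Lemma with $A=\mathbb{Q}$ gives exactly two outcomes: if $q=1$ the determinant $1-q$ vanishes and the cokernel is infinite, so $R(\phi)=\infty$; otherwise $\det\neq 0$, the cokernel has cardinality $1$, and $R(\phi)=1$. Both values occur, so $Spec(\mathbb{Q})=\{1\}\cup\{\infty\}$.

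For part (b) I would use the Lemma computing $Aut(\mathbb{Z}[1/p])$, so that every automorphism is multiplication by $\pm p^{k}$ for some $k\in\mathbb{Z}$; then $1-\phi$ is multiplication by $1\mp p^{k}$, and the cokernel Lemma yields $R(\phi)=v_{P}(1\mp p^{k})$ whenever this argument is nonzero and $R(\phi)=\infty$ when it vanishes (which happens only in the $+$ branch with $k=0$, i.e.\ $\phi=\mathrm{id}$). The computation then splits into the branch $\phi(1)=p^{k}$ and the branch $\phi(1)=-p^{k}$. The key arithmetic input is that for $k\neq 0$ the number $1\pm p^{k}\equiv 1\pmod{p}$, hence its integer part is coprime to $p$, so $v_{P}$ merely strips the power of $p$ out of the denominator: the $+$ branch gives $v_{P}(1-p^{k})=p^{|k|}-1$ and the $-$ branch gives $v_{P}(1+p^{k})=p^{|k|}+1$. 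Reindexing, the $+$ branch contributes $\{p^{l+1}-1\mid l\geq 0\}$ and, together with the residual case $k=0$ of the $-$ branch, the $-$ branch contributes the family $\{p^{l}+1\mid l\geq 0\}$; adjoining $\infty$ from $\phi=\mathrm{id}$ produces the stated set.

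The one place that requires genuine care, and the reason the statement has two shapes, is the residual case $k=0$ in the $-$ branch, where $1+p^{0}=2$. When $p$ is odd, $2$ is coprime to $p$ and $v_{P}(2)=2=p^{0}+1$, so it fits the family $\{p^{l}+1\mid l\geq 0\}$ cleanly. When $p=2$, however, $2=1/p^{-1}$ carries no factor coprime to $p$, so $v_{P}(2)=1$; this forces the $-$ branch to start at $l\geq 1$ and makes the endpoint behave differently. I would then check that for $p=2$ the stray value $1$ coincides with $2^{1}-1$ already supplied by the $+$ branch, so that nothing is lost and the spectrum collapses to $\{2^{l}+1,\ 2^{l}-1\mid l\geq 1\}\cup\{\infty\}$. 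This $p=2$ discrepancy, and the accompanying reindexing and overlap check, is the main obstacle; the remainder is a direct application of the cokernel Lemma.
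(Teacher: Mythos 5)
Your proposal is correct and follows essentially the same route as the paper: both reduce $R(\phi)$ to the cardinality of $\Coker(1-\phi)$, classify the automorphisms as multiplication by $q\in\mathbb{Q}^*$ (respectively $\pm p^{k}$) using the preceding lemmas, and evaluate the cokernel via the $v_P$-lemma. The only difference is completeness, not method: the paper computes the $p=2$ case and leaves the odd prime case to the reader, whereas you treat both branches uniformly and explicitly isolate the $k=0$ case of the $-$ branch, which is exactly what accounts for the different index ranges in the two stated formulas.
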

\begin{proof} Part a)- Given an automorphism $\phi: \mathbb Q \to \mathbb Q$ we know that it is multiplication by a rational number $r$. 
If $r=1$ then the cokernel of multiplication of  $r-1=0$ is $\mathbb Q$ so we obtain Reidemeister number infinite. Otherwise
we have multiplication by $r-1\ne 0$ which is surjective. Therefore the Reidemeister number is 1 and the result follows.

Let us consider $Z[1/2]$ and $P=\mathbb P- \{2\}$. An automorphism is multiplication by a rational number $r$ such that $v_P(r)$ is 1. So 
we have $r=\pm 2^l$, $l\in Z$. If $r=1$ then we obtain that the Reidemeiter number is infinite, and for $r=-1$ we obtain 
multiplication by -2 so we obtain Reidemeister number 1. So let $l\ne 0$. Then the numbers $2^l\pm 1$ are always odd and the 
result follows.
  The case $p$ an odd prime is similar and simpler. We leave it to the reader.
 \end{proof}

\section{The semi-direct product $\mathbb Q^n\rtimes \mathbb Z$, $\mathbb Z[1/p]^n\rtimes \mathbb Z$,  $n\leq 2$}

 We begin by recall some basic facts. Given any automorphism  $\phi$ of one of the groups $\mathbb Q^n\rtimes \mathbb Z$, 
$\mathbb Z[1/p]^n\rtimes \mathbb Z$,   we know from section 2 that the subgroup either $\mathbb Q^n$ or $\mathbb Z[1/p]^n$ is characteristic. 
So we obtain a homomorphism of short exact sequence. 
Whenever the induced homomorphism $\bar \phi$ on the quotient is the identity then by well known facts, see 
\cite{go:nil1} this  implies that the  Reidemeister number is infinite. Also from \cite{go:nil1} in the case where $\bar \phi:\mathbb Z \to \mathbb Z$ is multiplication by
 -1(the only other possibility) then the Reidemeister number is computed as the sum of the Reidemeister number of $\phi'$ 
and the Reidemeister number of  $\theta(1)\circ \phi'$. We will use the above procedure for  the calculation which follows.

\subsection{The case $n=1$}

In this subsection we have an action $\theta: \mathbb Z \to A$ where $A$ is either   $\mathbb Q$ or $\mathbb Z[1/p]$.
The homomorphism $\theta$ is completely determined by $\theta(1)$ which, in turn  is determined by its values at $1\in A$.
So we identify $\theta(1)$ with its value at $1\in A$. 

\begin{proposition} a) The    Reidemeister spectrum of $\mathbb Q\rtimes_{\theta}\mathbb Z$ is \break
$Spec(\mathbb Q\rtimes_{\theta}\mathbb Z)=\{\infty\}$ if
$\theta(1)$ is a non zero rational number different from $\pm 1$ and it is  $Spec(\mathbb Q\rtimes\mathbb Z)=\{2\}\cup\{\infty\}$ otherwise. \\
b)  For $p$ an odd prime   the     Reidemeister spectrum of $\mathbb Z[1/p]\rtimes_{\theta}\mathbb Z$ is \\
$Spec(\mathbb Z[1/p]\rtimes_{\theta}\mathbb Z)=\{\infty\}$ if
$\theta(1)\in \mathbb Z[1/p]$ is a non zero invertible element  different from $\pm 1$. If $\theta(1)=1$ then  
$Spec(\mathbb Z[1/p]\rtimes_{\theta}\mathbb Z)    =\{p^l+1, \  p^{l+1}-1 | l\geq 0 \}  \cup  \{\infty\}$.
 If  $\theta(1)=-1$ then
$Spec(\mathbb Z[1/p]\rtimes_{\theta}\mathbb Z)=\{  2p^{l+1}| l\geq 0 \}  \cup  \{\infty\}$.

c) The   Reidemeister spectrum of   $\mathbb Z[1/2]\rtimes_{\theta}\mathbb Z$ is 
$Spec(\mathbb Z[1/2]\rtimes_{\theta}\mathbb Z)=\{\infty\}$ if
$\theta(1)\in \mathbb Z[1/2]$ is a non zero invertible element  different from $\pm 1$. If $\theta(1)=1$ then
 $Spec(\mathbb Z[1/2]\rtimes_{\theta}\mathbb Z)=\{2(2^l+1), \  2(2^{l}-1) | l\geq 1\} \cup \{\infty\}$. \\ 
 If   $\theta(1)=-1$ then  $Spec(\mathbb Z[1/2]\rtimes_{\theta}\mathbb Z))=\{2^{l+1}| l\geq 1\} \cup \{\infty\}$.
\end{proposition}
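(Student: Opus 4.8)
The plan is to reduce every computation to the pair consisting of the characteristic subgroup $A\in\{\mathbb Q,\mathbb Z[1/p]\}$ and the quotient $\mathbb Z$, and then to finish by an elementary arithmetic enumeration. Write $G=A\rtimes_\theta\mathbb Z$ and set $t=\theta(1)$, which by the computation of $\Aut(\mathbb Q)$ and $\Aut(\mathbb Z[1/p])$ is multiplication by an invertible element of $A$ (an arbitrary nonzero rational when $A=\mathbb Q$, and $\pm p^k$ when $A=\mathbb Z[1/p]$). By the first Lemma the subgroup $A$ is characteristic, so any $\phi\in\Aut(G)$ restricts to an automorphism $\phi'$ of $A$, necessarily multiplication by an invertible $a$, and descends to $\bar\phi\in\Aut(\mathbb Z)=\{\pm1\}$.

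The first key step is a compatibility constraint. I would write $\phi(0,1)=(c,\bar\phi(1))$ and expand the relation $(0,1)(x,0)(0,1)^{-1}=(tx,0)$ under $\phi$; a direct computation of the conjugation in the semidirect product gives $t^{\bar\phi(1)}\,a=a\,t$, hence $t^{\bar\phi(1)}=t$. If $\bar\phi=\Id$ this is automatic, whereas if $\bar\phi=-1$ it forces $t^2=\Id$, and since the only elements of order dividing two in $\mathbb Q^{*}$ and in $\{\pm p^k\}\cong\mathbb Z\oplus\mathbb Z/2\mathbb Z$ are $\pm1$, we get $t=\pm1$. Combined with the quoted fact from \cite{go:nil1} that $\bar\phi=\Id$ already forces $R(\phi)=\infty$, this immediately yields $Spec(G)=\{\infty\}$ whenever $t\neq\pm1$, which is the first alternative in each of (a), (b), (c).

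It remains to treat $t=\pm1$, where $\bar\phi=-1$ is admissible for every invertible $a$ and every $c$. Here I would apply the addition formula $R(\phi)=R(\phi')+R(t\circ\phi')$ from \cite{go:nil1}, in which $\phi'$ is multiplication by $a$ and $t\circ\phi'$ is multiplication by $\pm a$. Each summand is the Reidemeister number of an automorphism $b\cdot$ of the abelian group $A$, namely $R(b\cdot)=\#\coker((1-b)\cdot)$, which Lemma 2.4 evaluates: it is $1$ when $A=\mathbb Q$ and $b\neq1$, it is $v_P(1-b)$ when $A=\mathbb Z[1/p]$ and $b\neq1$, and it is $\infty$ when $b=1$. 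Thus $R(\phi)$ is finite precisely when neither $a$ nor $\pm a$ equals $1$, and the whole problem collapses to enumerating the numbers $v_P(1-a)$ and $v_P(1\pm a)$ as $a$ runs over $\{\pm p^m:m\in\mathbb Z\}$. For $A=\mathbb Q$ this is degenerate: each finite summand is $1$, so both $t=1$ and $t=-1$ give $R(\phi)=1+1=2$ for admissible $a$, whence $Spec=\{2\}\cup\{\infty\}$, proving (a).

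The last step, and the only delicate one, is the arithmetic enumeration for $\mathbb Z[1/p]$. For $a=\pm p^m$ I would compute $v_P(1-p^m)=p^{|m|}-1$, $v_P(1+p^m)=p^{|m|}+1$, so that for $t=-1$ the two summands pair up to $v_P(1-a)+v_P(1+a)=2p^{|m|}$, giving $\{2p^{l+1}:l\geq0\}$ for odd $p$ and the analogous $\{2^{l+1}:l\geq1\}$ for $p=2$, exactly as stated. For $t=1$ the summands coincide and $R(\phi)=2\,v_P(1-a)$, i.e. twice the nonfinite part of $Spec(\mathbb Z[1/p])$ from Proposition 2.5, which is the form displayed in (c) for $p=2$ and which the same computation produces in (b). The hard part is the bookkeeping of the edge cases that fix the index ranges: the sign of $a$, the value at $m=0$, and in particular $v_P(2)$, which equals $2=p^{0}+1$ for odd $p$ (so the $+1$ family extends down to $l=0$) but equals $1$ for $p=2$ (so the value $a=-1$ collapses into the existing smallest term and the ranges begin at $l\geq1$). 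This single discrepancy in $v_P(2)$ is precisely what separates the odd‑prime statement (b) from the statement (c) for $p=2$, and getting these boundaries to match the stated sets is where I would expect the main effort to lie.
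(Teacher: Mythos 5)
Your proposal follows exactly the paper's route: the characteristic-subgroup lemma, the two facts quoted from \cite{go:nil1} (induced identity on the quotient $\mathbb Z$ forces $R(\phi)=\infty$; induced $-1$ gives $R(\phi)=R(\phi')+R(\theta(1)\circ\phi')$), the compatibility relation $\phi'\circ\theta(1)=\theta(-1)\circ\phi'$ forcing $\theta(1)^2=1$ (the paper's $kr=r^{-1}k$), and then the $v_P$-enumeration over $a=\pm p^m$. Parts (a), (c), and the $\theta(1)=-1$ case of (b) are carried out correctly and agree with the paper's computations, including the boundary analysis at $m=0$ and the $v_P(2)$ distinction between $p=2$ and odd $p$.

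There is, however, one genuine problem, in part (b) with $\theta(1)=1$. Your (correct) formula $R(\phi)=2\,v_P(1-a)$, $a=\pm p^m$, enumerates to $\{2(p^l+1),\ 2(p^{l+1}-1)\mid l\geq 0\}\cup\{\infty\}$, i.e.\ \emph{twice} the finite part of $Spec(\mathbb Z[1/p])$. But the set displayed in statement (b) is $\{p^l+1,\ p^{l+1}-1\mid l\geq 0\}\cup\{\infty\}$, with no doubling, and these sets are not equal: for instance $2=p^0+1$ lies in the printed set, yet $R(\phi)=2$ would require $v_P(1-a)=1$, i.e.\ $1-a=\pm p^k$ with $a=\pm p^m$, which is impossible. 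So your closing assertion that ``the same computation produces'' the displayed form in (b) is false; your computation in fact contradicts the statement you set out to prove. What you have actually uncovered is an inconsistency internal to the paper: the paper's own proof of (b) computes exactly your doubled values (``$2(p^l\pm 1)$, $l>0$ and $4$''), so the displayed statement (b) is evidently missing the factor $2$ and should read as the doubled set, parallel to the form of (c). Your mathematics agrees with the paper's proof, not with its printed claim; the correct move at that step is to flag the discrepancy (and correct the statement), rather than to paper over it with a reconciliation that does not hold.
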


\begin{proof} Let $\phi:Q\rtimes_{\theta}\mathbb Z$ be a automorphism. From the discussion in the beginning of the section we know that 
$\infty$ belongs to the spectrum and we will look at automorpisms such that $\bar \phi$ is multiplication by -1. We compute the
automorphisms $\phi':\mathbb Q \to \mathbb Q$ which arises as restriction of such automorphisms. In order to an automorphsm $\phi'$ be the
 restriction of an automorphism of the large  group we must have  the relation
$\phi'\circ \theta(1)=\theta(-1)\circ \phi'$. This implies that $kr=r^{-1}k$ where $\phi'(1)=k$(so different from 0) and $\theta(1)=r$.
This implies that $r^2=1$.  So if $r\ne \pm 1$ then there is no such automorphism and the Reidemeister spectrum of the group is 
$\{\infty\}$. If $r=\pm 1$ then  $k$ can assume any non zero value. If $r=1$ then the two automorphisms on the fibers are multiplication 
by $k$ and each one has Reidemeister number 1 if  $k\ne 1$. If $r=-1$  then the two automorphisms on the fibers are multiplication 
by $k$, $-k$ respectively. So for $k\ne \pm 1$ the  Reidemeister number is 2 and part a) follows.

 Part b)  The infinity certainly belongs to the spectrum because  the Reidemeister number of the identity is infinite. The
element    $\theta(1)$ is invertible since it is an isomorphism. As in case a) we have that if $\theta(1)\ne \pm 1$ there is 
no such automorphism and follows that the group has spectrum $\{\infty\}$. Again as in part a) for $\theta(1)=\pm 1$ we 
have $\phi'$ can be an  arbitrary  automorphism of $\mathbb Z[1/p]$.  For $r=1$ the two automorpisms are the same and we obtain as 
Reidmeister number $2(p^l\pm 1)$, $l>0$ and 4. If $r=-1$ then we have to look at $k-1$ and $k+1$ for $k$ an invertible,
 so of the form $\epsilon p^t$. If $t=0$ then we get Reidemeister infinite. If $t\ne 0$ then we get as Reidemeister number 
$2p^l$. So the result follows.

Part c). The proof  is  similar to the proof of case  b) where the only difference is because $2^0+1$ is not a Reidemeister number
of a homomorphism for case c) but $p^0+1$ it is for case b). This justifies why the formulas are slight different. 
\end{proof}

\subsection{The case $n=2$}

First we analyze the case where the kernel is $\mathbb Q^2$. Let $G=\mathbb Q^2\rtimes \mathbb Z$ and 
for  an automorphism $\phi:G \to G$ let $\phi'$ be
the restriction of $\phi$ to $\mathbb Q^2$. If $M$ is the matrix of $\phi'$ then we call $det(\phi')$ the determinant of $M$.

\begin{proposition}  The    Reidemeister spectrum of $\mathbb Q^2\rtimes_{\theta}\mathbb Z$ is either $\{\infty\}$ or $\{2\}\cup\{\infty\}$.
The $Spec(\mathbb Q^2\rtimes_{\theta}\mathbb Z)=\{\infty\}$ if  there is no an invertible matrix $N$ over $\mathbb Q$
such that $N=MNM$, $det(Id-N)\ne 0$ and $det(Id-MN)\ne 0$, where $\theta(1)=M$. Otherwise the 
Reidemeister spectrum is    $\{2\}\cup\{\infty\}$.
\end{proposition}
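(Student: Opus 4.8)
The plan is to reduce everything to the addition formula recalled at the start of this section together with the cokernel computation of the preceding Lemma. First I would note that $\mathbb{Q}^2$ is characteristic in $G$ by the divisibility Lemma, so every automorphism $\phi$ of $G$ induces an automorphism $\bar\phi$ of the quotient $\mathbb{Z}$, necessarily multiplication by $+1$ or by $-1$. If $\bar\phi$ is the identity, then the cited fact from \cite{go:nil1} gives $R(\phi)=\infty$; since the identity of $G$ is such an automorphism, $\infty$ always lies in the spectrum. It therefore remains to analyze the automorphisms with $\bar\phi$ equal to $-1$.

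Next I would pin down exactly which restrictions $\phi'$ occur. Writing $\phi'$ for the restriction of such a $\phi$ to $\mathbb{Q}^2$ and $N$ for its matrix, the requirement that $\phi$ respect the semidirect product structure forces the intertwining relation $\phi'\circ\theta(1)=\theta(-1)\circ\phi'$, exactly as in the case $n=1$. Since $\theta(-1)=\theta(1)^{-1}=M^{-1}$, this reads $NM=M^{-1}N$, i.e.\ $N=MNM$. Conversely, for any invertible $N$ over $\mathbb{Q}$ satisfying $N=MNM$ and any choice of translation part for the image of the generator of $\mathbb{Z}$, a direct check (independent of that translation part) shows that the resulting map is an automorphism of $G$ with $\bar\phi=-1$. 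Thus the automorphisms with $\bar\phi=-1$ are governed precisely by the invertible solutions $N$ of $N=MNM$.

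Then I would compute $R(\phi)$ via the addition formula $R(\phi)=R(\phi')+R(\theta(1)\circ\phi')$, valid when $\bar\phi=-1$. For an automorphism of the abelian group $\mathbb{Q}^2$ with matrix $L$, the Reidemeister number equals $|\Coker(\Id-L)|$, which by the previous Lemma (the case $A=\mathbb{Q}$) is $1$ when $\det(\Id-L)\ne 0$ and $\infty$ when $\det(\Id-L)=0$. Applying this to $L=N$ and to $L=MN$, the matrix of $\theta(1)\circ\phi'$, shows that each summand lies in $\{1,\infty\}$, so the only possible finite value of $R(\phi)$ is $2$, attained exactly when both $\det(\Id-N)\ne 0$ and $\det(\Id-MN)\ne 0$.

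Finally I would assemble the two cases. If no invertible $N$ with $N=MNM$ satisfies both determinant conditions, then every automorphism with $\bar\phi=-1$ has $R(\phi)=\infty$, so together with the identity case $Spec(\mathbb{Q}^2\rtimes_\theta\mathbb{Z})=\{\infty\}$. If such an $N$ exists, the corresponding $\phi$ realizes $R(\phi)=2$, and since no finite value other than $2$ can occur, $Spec(\mathbb{Q}^2\rtimes_\theta\mathbb{Z})=\{2\}\cup\{\infty\}$. I expect the point requiring the most care to be the converse direction in the second paragraph --- verifying that every invertible solution $N$ of $N=MNM$ genuinely extends to an automorphism of $G$, and that the translation part affects neither existence nor the Reidemeister number --- since the clean dichotomy $\{1,\infty\}$ for each summand, and hence the whole statement, rests on that realization step.
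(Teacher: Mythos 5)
Your proof is correct and follows exactly the route the paper intends: the paper's own proof is a one-line appeal to ``the considerations at the beginning of this section'' (kernel characteristic, $\bar\phi=\pm 1$, the addition formula $R(\phi)=R(\phi')+R(\theta(1)\circ\phi')$ from \cite{go:nil1}) together with Lemma 2.4 for $A=\mathbb{Q}$, and your argument is precisely that reasoning written out in full. The realization step you flag --- that every invertible solution $N$ of $N=MNM$ extends, for any translation part, to an automorphism inducing $-1$ on the quotient --- is indeed the detail the paper leaves implicit, and your handling of it is sound.
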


\begin{proof}The result follows  from the considerations on the beginning  of this section and basic facts  about Reidemeister numbers  for homomorphisms
of $\mathbb Q^2$. 
\end{proof}

   A more explict description of the matrices $N$ in terms of the matrix $M$ which satisfies  the conditions of the Proposition
above is still in progress.  But  we give an example of  a family of groups which have Reidemeister spectrum $\{\infty\}$ 
and also  an example of  another  family of groups which have  Reidemeister spectrum  $\{2\}\cup\{\infty\}$.\\

{\bf Example 1}- Let $\theta(1)=M$ be the automorphism given by 
$$
\begin{pmatrix}
r & 0 \\
0 & s
\end{pmatrix}
$$
\noindent  with $rs\ne 0, 1$ and either  $r^2\ne 1$ or $s^2\ne  1$. There is no automorphism such that the induced homomorphism 
on the quotient is multiplication by -1. This can be proven by showing that the only solutions for the
matrices $N$ which satisfy   $N=MNM$ has the property that  $det(N)=0$.

{\bf Example 2}- Let $\theta(1)$ be the automorphism given by 
$$
\begin{pmatrix}
r & 0 \\
0 & s
\end{pmatrix}
$$
\noindent $r^2\ne 1$, $rs=1$. By direct calculation we can find all matrices $N$ and they are 
of the form  
$$
\begin{pmatrix}
0 & b \\
c & 0
\end{pmatrix}
$$
\noindent for arbitrary $b,c$. The matrix $MN$ is 

$$
\begin{pmatrix}
0 & rb \\
cs & 0
\end{pmatrix}
$$

Then $det(Id-N)=det(Id-MN)=1-bc$ and whenever  $1-bc\ne 0$ we obtain an example
where the  Reidemeister number is 2. Therefore this provides a family of examples of  groups where the 
Reidemeister spectrum  is $\{2\}\cup\{\infty\}$.

{\bf Example 3}- Let $\theta(1)$ be the automorphism given by 
$$
\begin{pmatrix}
0 & u \\
v & 0
\end{pmatrix}.
$$
\noindent  Let us consider two cases. Suppose that 
 $|uv| \ne 1$. Then follows that there is no invertible $N$  such that $N=MNM$ 
and follows that the Reidemeister spectrum of such groups is $\{\infty\}$.

 For the second case let $u=v=1$. Then  
by direct calculation we can find all matrices $N$   
which turns   out to be of the form
$$
\begin{pmatrix}
a & b \\
b & a
\end{pmatrix}
$$
\noindent and the  matrix $MN$ is of the form
$$
\begin{pmatrix}
b & a \\
a & b
\end{pmatrix}.
$$

\noindent The first matrix  has determinant $a^2-b^2$. The determinant of $Id-N$ and $Id-MN$ are respectively
$1-2a+a^2-b^2$ and $1-2b+b^2-a^2$. There are plenty of rational  values of $a$ and $b$ such that  these 3 values are 
different from zero and consequently we obtain for each such values one example of a group which admits 
an automorphism which has Reidemeister number $2$.

 Now we consider the case  
$\mathbb Z[1/p]^2\rtimes_{\theta}\mathbb Z$ for $p$ 
a prime. We will compute   the    Reidemeister spectrum of $\mathbb Z[1/p]^2\rtimes_{\theta}\mathbb Z$ for two 
families of actions $\theta$. 

The  Proposition  3.2 holds partially in this case.

\begin{proposition}  The    Reidemeister spectrum of   $\mathbb Z[1/p]^2\rtimes_{\theta}\mathbb Z$ is $\{\infty\}$ 
if and only if there is no an invertible matrix $N$ over $\mathbb Z[1/p]$
such that $N=MNM$ and both $det(Id-N)$ and $det(Id-MN)$ are non zero. Here $\theta(1)=M$. 
\end{proposition}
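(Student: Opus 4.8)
The plan is to follow exactly the same strategy as in the proof of Proposition 3.2, which itself relies on the general setup described at the beginning of Section 3. First I would recall that since $\mathbb{Z}[1/p]^2$ is characteristic in $G=\mathbb{Z}[1/p]^2\rtimes_\theta\mathbb{Z}$ (by the first Lemma of Section 2, because $\mathbb{Z}[1/p]$ is $p$-divisible), any automorphism $\phi$ of $G$ descends to an automorphism $\bar\phi$ of the quotient $\mathbb{Z}$, which must be $\pm 1$. When $\bar\phi=\mathrm{Id}$, the results cited from \cite{go:nil1} give $R(\phi)=\infty$, so $\infty$ always lies in the spectrum. This immediately gives one direction of the equivalence: if the only admissible automorphisms are those inducing the identity on the quotient, then $Spec(G)=\{\infty\}$, and such a situation occurs precisely when no automorphism with $\bar\phi=-1$ exists.

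Next I would analyze the automorphisms with $\bar\phi$ equal to multiplication by $-1$. Writing $\phi'$ for the restriction of $\phi$ to $\mathbb{Z}[1/p]^2$ with matrix $N$ and $\theta(1)=M$, the compatibility condition forcing $\phi$ to be a genuine automorphism of $G$ with $\bar\phi=-1$ is the relation $\phi'\circ\theta(1)=\theta(-1)\circ\phi'$, i.e. $NM=M^{-1}N$, which rearranges to $N=MNM$. Here $N$ must be invertible over $\mathbb{Z}[1/p]$ so that $\phi'$ is an automorphism. Then, again by the formula from \cite{go:nil1}, the Reidemeister number is the sum of the Reidemeister number of $\phi'$ and that of $\theta(1)\circ\phi'$, and by the fourth Lemma of Section 2 these two Reidemeister numbers are finite exactly when $\det(\mathrm{Id}-N)\ne 0$ and $\det(\mathrm{Id}-MN)\ne 0$ respectively (in which case the cokernels are finite of cardinality $v_P(\det(\mathrm{Id}-N))$ and $v_P(\det(\mathrm{Id}-MN))$), and infinite otherwise.

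Assembling these observations yields the stated equivalence. If there exists an invertible $N$ with $N=MNM$ and both determinants nonzero, then the corresponding $\phi$ has finite Reidemeister number, so $Spec(G)\ne\{\infty\}$. Conversely, if no such $N$ exists, then every automorphism with $\bar\phi=-1$ produces an infinite Reidemeister number (because at least one of the two determinants vanishes, forcing an infinite cokernel over $\mathbb{Z}[1/p]$ by the fourth Lemma), and every automorphism with $\bar\phi=\mathrm{Id}$ also gives $\infty$; hence $Spec(G)=\{\infty\}$. I expect the main subtlety, and the reason the proposition only holds \emph{partially} compared with the rational case, to be the invertibility condition over $\mathbb{Z}[1/p]$: unlike $\mathbb{Q}$, not every nonzero determinant is a unit, so the precise finite \emph{values} entering the spectrum are governed by the arithmetic function $v_P$ rather than being uniformly equal to $2$. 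The statement as phrased sidesteps this by characterizing only when the spectrum collapses to $\{\infty\}$, so the delicate bookkeeping of which finite numbers actually occur is deferred; verifying the cokernel finiteness criteria via the fourth Lemma is the one place where genuine care is required.
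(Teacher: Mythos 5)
Your proof is correct and follows essentially the same route as the paper, which proves this proposition by appealing to the general reduction described at the start of Section~3 (characteristic kernel, the dichotomy $\bar\phi=\pm\mathrm{Id}$, the sum formula from the cited nilmanifold paper) together with the cokernel computation of Lemma~2.4; you have simply written out explicitly the steps the paper leaves implicit, including the compatibility condition $N=MNM$ and the role of $v_P$.
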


\begin{proof} The proof is similar  to the proof of Proposition 3.2.
\end{proof}

 The Reidemeister spectrum for the groups where there exist a solution $N$ is not very simple in general
but we can answer for two large families of actions $\theta$.  For example if $\theta(1)$ is the identity
then the Reidemeister spectrum of this group is obtained  multiplying by 2 the numbers(including infinite) which belong to the spectrum of the 
first factor.
So the first step is to compute the 
Reidemeister spectrum  of  $\mathbb Z[1/p]^2$.

\begin{proposition} The Reidemeister spectrum of  $Z[1/p]^2$ is \\
$Spec(\mathbb Z[1/p]^2)=\{n |  n\in \mathbb N \ and \  (n,p)=1 \}\cup \{\infty\}$  where $(n,p)$ denote 
the gcd of $n$ and $p$.
\end{proposition}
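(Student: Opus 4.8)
The plan is to compute the Reidemeister spectrum of $\mathbb Z[1/p]^2$ by characterizing, for each automorphism $\psi$ of this free $\mathbb Z[1/p]$-module, the cardinality of the cokernel of $\Id-\psi$. By definition, for an abelian group the Reidemeister number $R(\psi)$ equals the number of twisted conjugacy classes, which for the abelian case is exactly $\#\coker(\Id-\psi)$. So the whole problem reduces to determining which values $\#\coker(\Id-\psi)$ can take as $\psi$ ranges over $\Aut(\mathbb Z[1/p]^2)$, together with the observation that $R(\psi)=\infty$ precisely when $\det(\Id-\psi)=0$.

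First I would invoke Lemma 2.5 (the cokernel lemma) with the matrix $M=\Id-\psi$: if $\det(\Id-\psi)\ne 0$ then $\#\coker(\Id-\psi)=v_P(\det(\Id-\psi))$, the prime-to-$p$ part of the determinant, and if $\det(\Id-\psi)=0$ the cokernel is infinite. Here I must be slightly careful: Lemma 2.5 is stated for a homomorphism $\psi$, and I am applying it to $\Id-\psi$, which need not itself be an automorphism of $\mathbb Z[1/p]^2$; but the lemma's cokernel computation only used that the matrix has entries in $\mathbb Z[1/p]$ and that $A=\mathbb Z[1/p]$ is the coefficient ring, so it applies to any $\mathbb Z[1/p]$-linear endomorphism. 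Thus the spectrum is exactly the set of values $v_P(\det(\Id-\psi))$ achievable by $\psi\in\Aut(\mathbb Z[1/p]^2)$, unioned with $\{\infty\}$.

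The main work, then, is the two inclusions. For the claim that every achievable finite value $n$ satisfies $(n,p)=1$: this is immediate, since $v_P(x)$ is by construction the numerator of $x$ after stripping all powers of $p$, hence coprime to $p$. For the reverse inclusion — that every natural number $n$ with $(n,p)=1$ actually occurs — I would exhibit, for each such $n$, an explicit automorphism realizing it. A clean choice is a diagonal (or companion-type) automorphism: take $\psi$ with matrix $\diag(a,d)$ where $a,d$ are units of $\mathbb Z[1/p]$, i.e.\ of the form $\pm p^k$; then $\det(\Id-\psi)=(1-a)(1-d)$ and $v_P$ of this is $v_P(1-a)\cdot v_P(1-d)$. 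Since for $A=\mathbb Z[1/p]$ Proposition 2.6 already tells us which single-factor values $v_P(1-a)$ arise (namely numbers of the form $p^l\pm 1$ suitably normalized, all coprime to $p$), I would argue that products of such values, together with the value $1$ (realized by a unit $a$ with $1-a$ itself a unit times a power of $p$), generate every $n$ coprime to $p$. The key number-theoretic point is that every $n$ coprime to $p$ divides some $p^l-1$ (because $p$ is a unit modulo $n$, so $p$ has finite multiplicative order $l$ in $(\mathbb Z/n\mathbb Z)^*$, giving $n \mid p^l-1$), and then one can adjust by a second diagonal entry to cut the prime-to-$p$ part of the determinant down to exactly $n$.

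The hard part will be this last realization step: producing, for a \emph{prescribed} $n$ coprime to $p$, a unit or pair of units whose associated determinant has $v_P$ equal to $n$ exactly, rather than merely a multiple of $n$. The order-of-$p$-modulo-$n$ argument gives some $p^l-1$ divisible by $n$, but I then need to factor off the unwanted cofactor $(p^l-1)/n$ using the second diagonal slot, which requires that cofactor to itself be realizable as a $v_P(1-d)$ for a unit $d$ — and not every integer coprime to $p$ is literally of the form $p^m\pm 1$. I expect the resolution is to abandon the purely diagonal ansatz and instead use the full $2\times 2$ freedom: choose $M=\Id-\psi$ to be, up to the $R_{p^l}$-rescaling of Lemma 2.5, an integer matrix of prescribed prime-to-$p$ determinant $n$ whose complement $\psi=\Id-M$ is a $\mathbb Z[1/p]$-automorphism (i.e.\ $\det\psi$ a unit, $=\pm p^j$), and to check such an integer matrix exists for every target $n$ coprime to $p$ by a direct construction (for instance an upper-triangular matrix with entries tuned so that $\det(\Id-\psi)=n$ while $\det\psi=\pm p^j$). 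Verifying simultaneously that $\det\psi$ is a unit and $v_P(\det(\Id-\psi))=n$ is the delicate bookkeeping, and I would organize it as: fix the off-diagonal and one diagonal entry to control $\det\psi$, then solve for the remaining entry to hit $n$.
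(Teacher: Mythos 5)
Your reduction of $R(\psi)$ to $\#\coker(\Id-\psi)$ and the upper bound are exactly right and match the paper: by the cokernel lemma (Lemma 2.4 of the paper, which, as you correctly note, applies to an arbitrary $\mathbb Z[1/p]$-linear endomorphism such as $\Id-\psi$, not just to automorphisms), every finite Reidemeister number equals $v_P(\det(\Id-\psi))$ and is therefore coprime to $p$, while $\det(\Id-\psi)=0$ (e.g.\ $\psi=\Id$) gives $\infty$. The genuine gap is in the realization step, which you explicitly leave as ``delicate bookkeeping'': you never produce, for a prescribed $n$ coprime to $p$, an automorphism with $v_P(\det(\Id-\psi))=n$, and the one concrete ansatz you float --- an upper-triangular matrix --- provably cannot work. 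Over the integral domain $\mathbb Z[1/p]$, a triangular matrix is invertible only if both diagonal entries $a,d$ are units (if $ad$ is a unit, so are $a$ and $d$ separately), hence $\det(\Id-\psi)=(1-a)(1-d)$ with $a,d\in\{\pm p^k\}$: this is precisely the diagonal constraint you had already rejected, and rightly so. For instance, with $p=2$ the achievable factor values $v_P(1\mp 2^{\pm k})$ all lie in $\{1\}\cup\{2^k-1 : k\geq 1\}\cup\{2^k+1 : k\geq 1\}$, so no product of two of them equals $11$, since $11$ is prime and not of the form $2^k\pm 1$; thus $n=11$ is unreachable by any diagonal or triangular $\psi$.

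The missing step is, however, a one-line computation once you allow genuinely non-triangular matrices, and it is exactly what the paper does. For $2\times 2$ matrices one has $\det(\Id-\psi)=1-\tr(\psi)+\det(\psi)$. Take the companion matrix $\psi=\left(\begin{smallmatrix}0&-1\\ 1&2-n\end{smallmatrix}\right)\in GL(2,\mathbb Z)$: then $\det(\psi)=1$ and $\tr(\psi)=2-n$, so $\det(\Id-\psi)=n$, and since $(n,p)=1$ the cokernel lemma gives Reidemeister number exactly $v_P(n)=n$ when $\psi$ is viewed as an automorphism of $\mathbb Z[1/p]^2$. The paper compresses this into three sentences: every $N\in GL(2,\mathbb Z)$ is also an automorphism of $\mathbb Z[1/p]^2$; the spectrum of $\mathbb Z^2$ is all of $\mathbb N$ (which is proved by precisely such companion matrices); and Lemma 2.4 converts the integer Reidemeister number into its prime-to-$p$ part $v_P$. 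So your instinct to ``use the full $2\times 2$ freedom'' was the right one, but the proof needs this explicit witness; in particular no number-theoretic argument about the order of $p$ modulo $n$ is required at all.
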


\begin{proof}  Any matrix $N \in Gl(2,\mathbb Z)$ can be regarded as an automorphism
of $\mathbb Z[1/p]^2$. It is well known that the Reidemeister spectrum of $\mathbb Z+\mathbb Z$ is 
$\mathbb N$. So the Reidemeister spectrum of $\mathbb Z[1/p]^2$ contains $v_p(n)$ for every natural number
by Lemma 2.4. So it  contains all positive numbers relatively prime to $p$. But again by Lemma 2.4
an element of the spectrum has to be a positive  integer relatively prime to $p$.
So the result follows. 
\end{proof}

An immediate consequence of the Proposition above is that 
$Spec(\mathbb Z[1/p]^2\times {\mathbb Z)=\{2n | (n,p)=1  \  n\in \mathbb N \}\cup \{\infty}\}$ where $(n,p)$ denote the gcd of $n$ and $p$.

Let us start with  the group $Spec(\mathbb Z[1/2]^2\rtimes_{\theta}\mathbb Z)$.  

\begin{proposition} Let  $\theta(1)$ be of the form:
$$
\begin{pmatrix}
r& 0 \\
0 & s
\end{pmatrix}
$$ Then we have the following cases:

a) If  $r=s=\pm 1$ then  
$Spec(\mathbb Z[1/2]^2\rtimes_{\theta}\mathbb Z)=\{2n | (n,2)=1  \  n\in \mathbb N \}$ where $(n,2)$   denote the gcd of $n$ and $2$.\\
b) If $r=-s=\pm 1$ then  the   Reidemeister spectrum of the group $\mathbb Z[1/2]^2\rtimes_{\theta}\mathbb Z$ is 
$Spec(\mathbb Z[1/2]^2\rtimes_{\theta}\mathbb Z)=\{2^{l+1}, 2^{k}(2^l\pm 1)|  \ l\geq 1,  k\geq 2\}\cup \{\infty\}$.\\
c) If  $rs=1$ and $|r|\ne 1$      
then   $Spec(\mathbb Z[1/2]^2\rtimes_{\theta}\mathbb Z)=\{2(2^k+1), 2(2^l-1) | \ k\geq 0,  \ l\geq 1\}\cup \{\infty\}$\\
d) If either $r$ or $s$        does not have module equal to one, and $ rs \ne 1$ then 
 there is no automorphism of the group 
such that the induced on the quotient is multiplication by $-1$ and follows that  $Spec(\mathbb Z[1/2]^2\rtimes_{\theta}\mathbb Z)=\{\infty\}$.
\end{proposition}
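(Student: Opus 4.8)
The plan is to run the scheme of Propositions 3.2 and 3.3. Because $\mathbb{Z}[1/2]^{2}$ is $2$-divisible, Lemma 2.1 makes it characteristic, so each $\phi\in\Aut(G)$ induces $\bar\phi=\pm1$ on the quotient $\mathbb{Z}$. If $\bar\phi=+1$ then $R(\phi)=\infty$ (so $\infty$ always lies in the spectrum), and all the content sits in the automorphisms with $\bar\phi=-1$. For these the addition formula of \cite{go:nil1} together with Lemma 2.4 gives
\[
R(\phi)=v_{P}(\det(\Id-N))+v_{P}(\det(\Id-MN)),
\]
where $N=\phi'$ is the restriction of $\phi$ to the kernel, $M=\theta(1)$, $P$ is the set of odd primes, and a vanishing determinant is read as $\infty$. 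By Proposition 3.3 such an $N$ occurs precisely when it is invertible and satisfies $N=MNM$.

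First I would solve $N=MNM$ for $M=\diag(r,s)$. Entrywise the relation reads $n_{11}(r^{2}-1)=0$, $n_{22}(s^{2}-1)=0$, and $n_{12}(rs-1)=n_{21}(rs-1)=0$. This settles case (d) at once: if $rs\neq1$ then $n_{12}=n_{21}=0$, and since at least one of $r,s$ has modulus $\neq1$ its square is $\neq1$, forcing the corresponding diagonal entry to vanish; the resulting diagonal $N$ is singular, so no automorphism with $\bar\phi=-1$ exists and the spectrum is $\{\infty\}$. For case (c), $rs=1$ with $r^{2}\neq1$ forces $n_{11}=n_{22}=0$, so $N=\begin{pmatrix}0&b\\c&0\end{pmatrix}$; a direct computation gives $\det(\Id-N)=\det(\Id-MN)=1-bc$, whence $R(\phi)=2\,v_{P}(1-bc)$. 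As $bc$ runs over the invertibles $\pm2^{k}$ of $\mathbb{Z}[1/2]$, $v_{P}(1-bc)$ runs over the finite part of $Spec(\mathbb{Z}[1/2])$ of Proposition 2.5, and doubling gives the stated set.

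For case (b), $rs=-1$ with $r,s\in\{\pm1\}$ gives $rs-1=-2\neq0$, forcing $n_{12}=n_{21}=0$ while the diagonal stays free, so $N=\diag(a,d)$ with $a,d$ invertible. Then $\det(\Id-N)=(1-a)(1-d)$ and $\det(\Id-MN)=(1-a)(1+d)$, so by multiplicativity of $v_{P}$,
\[
R(\phi)=v_{P}(1-a)\bigl[v_{P}(1-d)+v_{P}(1+d)\bigr];
\]
evaluating each factor on $a,d=\pm2^{i}$ and collecting powers of $2$ produces the families $2^{l+1}$ and $2^{k}(2^{l}\pm1)$. In case (a), $r=s=\pm1$, the relation is vacuous and $N$ is an arbitrary element of $GL(2,\mathbb{Z}[1/2])$. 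For $r=s=1$ one has $M=\Id$, so $R(\phi)=2\,v_{P}(\det(\Id-N))$, which runs over twice the (odd) spectrum of $\mathbb{Z}[1/2]^{2}$ from Proposition 3.4, i.e. over $\{2n:(n,2)=1\}$.

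The step I expect to be the main obstacle is the exact arithmetic that converts ``$v_{P}(\det(\cdots))$ over $\mathbb{Z}[1/2]$'' into the precise closed-form index sets: since $a,b,c,d$ may be negative powers of $2$ and signs can flip, the small-exponent boundary behaves irregularly — for instance $2^{0}+1=2$ is itself a power of $2$ and so contributes $v_{P}=1$ rather than a new value, which is exactly the shift of index ranges seen between the one-dimensional cases of Proposition 3.1. Within case (a) the subcase $r=s=-1$ (where $M=-\Id$ and $R(\phi)=v_{P}(\det(\Id-N))+v_{P}(\det(\Id+N))$) must be treated separately from $r=s=1$, because here the two summands need not be equal; the delicate point is to pin down exactly which sums of two odd values $v_{P}(\det(\Id\mp N))$ are realized as $N$ ranges over $GL(2,\mathbb{Z}[1/2])$, and hence whether the realized set is $\{2n:(n,2)=1\}$ or strictly larger.
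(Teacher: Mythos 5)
Your scheme is exactly the paper's own: characteristic kernel, the addition formula $R(\phi)=v_P(\det(\Id-N))+v_P(\det(\Id-MN))$ for automorphisms inducing $-1$ on the quotient, and the entrywise solution of $N=MNM$. Your cases (b), (c), (d) are correct and are in fact worked out in more detail than in the paper, which settles (b) and (c) with ``straightforward calculation.''

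The genuine gap is the one you flag yourself at the end: case (a) with $r=s=-1$, and it is not a removable loose end. For $M=-\Id$ the constraint $N=MNM$ is vacuous, and writing $t=\tr N$, $D=\det N$ you get
\[
R(\phi)=v_P(1-t+D)+v_P(1+t+D),
\]
two summands that need not be equal. Taking $N=\begin{pmatrix}1&-1\\1&0\end{pmatrix}\in GL(2,\mathbb{Z}[1/2])$, so $t=D=1$, gives $v_P(1)+v_P(3)=1+3=4$, which is $2n$ with $n=2$ even, hence \emph{outside} the claimed set $\{2n \mid (n,2)=1\}$. So the realized spectrum for $r=s=-1$ is strictly larger than stated: your suspicion is justified, and no amount of further computation will close the gap in the direction of part (a) as printed. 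Note that the paper's own proof has the same hole --- ``all invertible matrices $N$ are solutions, so the result follows from Proposition 3.4'' is only valid for $M=\Id$, where the two summands coincide. A smaller instance of the same boundary phenomenon sits in your case (c): doubling the finite part of $Spec(\mathbb{Z}[1/2])$ gives $\{2(2^l\pm1)\mid l\ge1\}$, which does \emph{not} contain the element $2(2^0+1)=4$ listed in the statement; since $R=2\,v_P(1-bc)$ is always twice an odd number, $4$ is indeed not realized, so your computation is right but does not literally ``give the stated set'' --- that discrepancy lies in the statement itself (the extra value $4$ is correct for odd $p$, where $v_P(2)=2$, but not for $p=2$).
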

\begin{proof} Let $N$ be the matrix 
$$
\begin{pmatrix}
a& b\\
c& d
\end{pmatrix}.
$$
The equation $N=MNM$ corresponds   to the system $a=ar^2$,   $b=brs$, $c=crs$ and $d=ds^2$. 
Suppose that $|r|\ne 1$ and   $ rs \ne 1$. This implies that $a=b=c=0$ and so the system has no solution for
 a matrix $N$ invertible. Similarly if we assume $|s| \ne 1$. So part d) follows.

For the part a) we have that all invertible matrices $N$ are solutions. So the result follows 
from the Proposition 3.4 above.

For the part b) we have that the matrix $N$ is diagonal. By straightforward calculation  we can assume that the elements 
of the diagonal of $N$ are of the form $ 2^i$ for $i\geq 0$. A direct calculation shows the values for the spectrum.

For the part c) we have that the matrix $M$ is of the form 

$$
\begin{pmatrix}
\epsilon 2^{\ell}& 0\\
0& \epsilon 2^{-\ell}
\end{pmatrix}.
$$

\noindent  and  $N$ of the form 

$$
\begin{pmatrix}
0& \delta_1 2^u\\
\delta_2 2^v & 0
\end{pmatrix}.
$$

\noindent The product $MN$ is given by 

$$
\begin{pmatrix}
0& \epsilon\delta_1 2^{u+\ell}\\
\epsilon\delta_2 2^{v-\ell} & 0
\end{pmatrix}.
$$
 \noindent Follows the  matrices of $Id-N$ and $Id-MN$:

$$
\begin{pmatrix}
1& -\delta_1 2^{u}\\
-\delta_2 2^{v} & 1
\end{pmatrix}.
$$

\noindent and 
$$
\begin{pmatrix}
 1 & -\epsilon\delta_1 2^{u+\ell}\\
-\epsilon\delta_2 2^{v-\ell} & 1
\end{pmatrix}
$$

\noindent respectively. The result follows by straightforward calculation.

\end{proof}

\begin{proposition} Let  $\theta(1)$ be of the form:
$$
\begin{pmatrix}
0 & u \\
v & 0
\end{pmatrix}
$$ Then we have the following cases: 

a) If $ uv=1$     then    the  Reidemeister spectrum of the group $\mathbb Z[1/2]^2\rtimes_{\theta}\mathbb Z$ is 
$Spec(\mathbb Z[1/2]^2\rtimes_{\theta}\mathbb Z)=\{ 2^k(2^l\pm 1)| \ k\geq 2,  l\geq 1\} \cup\{ \infty\}$.\\

b) If $ uv=-1$  then    the  Reidemeister spectrum of the group $\mathbb Z[1/2]^2\rtimes_{\theta}\mathbb Z$ is 
$Spec(\mathbb Z[1/2]^2\rtimes_{\theta}\mathbb Z)=\{2(2^{2m}-1)| \  m>0\} \cup\{ \infty\}$.\\

c) If   $u^2v^2\ne  1$ then    there is no automorphism of the group such that the induced on the quotient is multiplication by $-1$ and follows that  $Spec(\mathbb Z[1/2]^2\rtimes_{\theta}\mathbb Z)=\{\infty\}$.
 \end{proposition}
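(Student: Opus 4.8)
The plan is to reuse the method of Proposition 3.5, now with the anti-diagonal $\theta(1)=M=\Mat{0}{u}{v}{0}$. Since $\mathbb Z[1/2]^2$ is divisible by $2$, Lemma 2.1 makes it characteristic, so any automorphism $\phi$ induces $\bar\phi=\pm 1$ on the quotient $\mathbb Z$; the identity contributes $\infty$ to the spectrum by the facts recalled at the start of Section 3, and it remains to treat $\bar\phi$ equal to multiplication by $-1$. For such $\phi$ the restriction $\phi'=N\in GL(2,\mathbb Z[1/2])$ must satisfy $\phi'\circ\theta(1)=\theta(-1)\circ\phi'$, i.e. $NM=M^{-1}N$, equivalently $N=MNM$, and then $R(\phi)=R(N)+R(MN)$. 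As $N$ and $MN$ are automorphisms of the abelian group $\mathbb Z[1/2]^2$, each Reidemeister number is the order of a cokernel, so by Lemma 2.4 one has $R(N)=v_P(\det(Id-N))$ and $R(MN)=v_P(\det(Id-MN))$ whenever these determinants are nonzero, and the corresponding summand is $\infty$ otherwise.

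First I would solve $N=MNM$. Writing $N=\Mat{a}{b}{c}{d}$ and computing $MNM=\Mat{uvd}{u^2c}{v^2b}{uva}$, the equation becomes $a=uvd$, $d=uva$, $b=u^2c$, $c=v^2b$, so that $a((uv)^2-1)=b((uv)^2-1)=0$. This settles part (c) at once: if $u^2v^2\neq 1$ then $(uv)^2-1\neq 0$ forces $a=b=c=d=0$, no invertible solution exists, and Proposition 3.3 gives $Spec=\{\infty\}$.

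For part (a), where $uv=1$, the surviving relations are $d=a$ and $c=v^2b$; setting $b'=vb$ the two determinants become $\det(Id-N)=(1-a)^2-b'^2$ and $\det(Id-MN)=(1-b')^2-a^2$, which coincide with those of the symmetric representative $\Mat{a}{b'}{b'}{a}$ treated in Example 3, while $\det N=a^2-b'^2$ must be a unit. Introducing $x=a+b'$ and $y=a-b'$ one factors $\det(Id-N)=(1-x)(1-y)$ and $\det(Id-MN)=(1-x)(1+y)$, and $\det N=xy$ a unit forces $x,y\in\{\pm 2^m\}$. Multiplicativity of $v_P$ then yields $R(\phi)=v_P(1-x)\bigl(v_P(1-y)+v_P(1+y)\bigr)$. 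Requiring both determinants nonzero means $x\neq 1$ and $y\neq\pm 1$; writing $x=\pm 2^{m_x}$ and $y=\pm 2^{m_y}$, a direct evaluation of $v_P(1\pm 2^m)$ gives $v_P(1-y)+v_P(1+y)=2^{|m_y|+1}\geq 2^2$ and $v_P(1-x)\in\{2^l\pm 1:\ l\geq 1\}$, producing $R(\phi)=2^k(2^l\pm 1)$ with $k\geq 2$, $l\geq 1$, as stated.

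For part (b), where $uv=-1$, the relations become $d=-a$ and $c=v^2b$, and with $b'=vb$ one checks that both $Id-N$ and $Id-MN$ have determinant $1-a^2-b'^2=1+\det N$. Since $\det N=-(a^2+b'^2)$ must be a unit we may write $a^2+b'^2=2^{j}$, whence $R(\phi)=2\,v_P(1-2^{j})$. The spectrum is then read off from which powers $2^{j}$ are representable as a sum of two squares in $\mathbb Z[1/2]$ together with the resulting values of $v_P(1-2^{j})$. I expect this last step to be the main obstacle: unlike the formal determinant manipulations of the other cases, it is genuinely arithmetic, and pinning down precisely which exponents $j$ occur — hence which Reidemeister numbers are realized — is exactly what forces the answer into the stated family. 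The admissible $j$ must be extracted from the $2$-adic behaviour of sums of two squares, while the nonvanishing requirement $\det(Id-N)\neq 0$ removes the degenerate exponent, leaving the family recorded in the statement.
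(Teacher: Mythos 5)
Your parts (c) and (a) are correct and complete. In fact your route through (a) --- factoring $\det(Id-N)=(1-x)(1-y)$ and $\det(Id-MN)=(1-x)(1+y)$ with $x=a+bv$, $y=a-bv$, then using multiplicativity of $v_P$ --- is cleaner than the paper's own proof, which writes $a+bv=\epsilon_1 2^{l_1}$, $a-bv=\epsilon_2 2^{l_2}$, expands the same determinants without factoring, and then runs a case analysis on $l_1,l_2$; both routes land on $\{2^k(2^l\pm 1):k\geq 2,\ l\geq 1\}\cup\{\infty\}$.

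The genuine gap is in part (b), and you flag it yourself. You correctly reduce to $R(\phi)=2\,v_P(1-2^j)$, where $2^j=a^2+(bv)^2$ ranges over those units of $\mathbb Z[1/2]$ expressible as a sum of two squares of elements of $\mathbb Z[1/2]$, but you never determine which exponents $j$ occur --- and that determination is the entire content of (b). Worse, your expectation that this step ``forces the answer into the stated family'' is false: since $2=1^2+1^2$ is itself a unit of $\mathbb Z[1/2]$, \emph{every} power of $2$ is such a sum ($2^j=(2^{(j-1)/2})^2+(2^{(j-1)/2})^2$ for $j$ odd, $2^j=(2^{j/2})^2+0^2$ for $j$ even), so the realized Reidemeister numbers are $2(2^m-1)$ for all $m\geq 1$, not only for even exponents. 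Concretely, take $u=-1$, $v=1$ and $N=\Mat{1}{1}{1}{-1}$: then $N=MNM$, $\det N=-2$ is invertible in $\mathbb Z[1/2]$, and $\det(Id-N)=\det(Id-MN)=-1$, so $R(\phi)=1+1=2$, a value not in the stated spectrum $\{2(2^{2m}-1):m>0\}\cup\{\infty\}$. Note that the paper's own proof of (b) commits exactly the slip your plan would need to avoid: writing $a=r2^m$, $bv=s2^n$ with $r,s$ odd, it concludes from invertibility of $a^2+(bv)^2$ that ``$r^2+s^2$ is necessarily $1$,'' overlooking the possibility $r^2+s^2=2$, which is allowed precisely because $2$ is a unit in $\mathbb Z[1/2]$. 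So your setup for (b) is right, but completing it honestly does not prove the statement as written; it corrects it, to $\{2(2^m-1):m\geq 1\}\cup\{\infty\}$.
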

\begin{proof} In this case the entries of the matrix $N$ 
$$
\begin{pmatrix}
a& b\\
c& d
\end{pmatrix}
$$

\noindent must satisfy the equations $uvd=a, u^2c=b, bv^2=c, auv=d$. It follows that $a=(uv)^2a$,  $d=(uv)^2d$, $b=(uv)^2b$, $c=(uv)^2c$.
So part c) follows promptly from these equations.  

 Let us consider the case a).  In this case the system of equations provide
$a=d$. So the determinant of $N$ becomes $a^2-bc=a^2-b^2v^2=(a+bv)(a-bv)$ which is an invertible element, so $(a+bv)$ and $(a-bv)$ are also 
invertible. Since $uv=1$ follows that  $u=\delta 2^{-t}$ and  $v=\delta 2^{t}$ for some integer $t$ and $\delta \in \{1,-1\}$.   

 The matrix $N$ is of the form 

$$
\begin{pmatrix}
a& b\\
bv^2& a
\end{pmatrix}
$$
 \noindent and the matrix $MN$ is

$$
\begin{pmatrix}
bv& a/v\\
av& bv
\end{pmatrix}.
$$

\noindent Our task is to compute $v_{P}$ of $det(Id-N)$ and $det(Id-MN)$. We have   $det(Id-N)=1-2a+a^2-b^2v^2$ and 
 $det(Id-MN)=1-2bv+b^2v^2-a^2$ so  $-det(Id-MN)=-1+2bv+a^2-b^2v^2$. We compute $v_{P}$ of  $-det(Id-MN)$. 

 Since $a+bv$ and $a-bv$ are  invertible they are of the form $a+bv=\epsilon_12^{l_1}$ and   $a-bv=\epsilon_22^{l_2}$.
Therefore $a=\epsilon_1 2^{l_1-1} +\epsilon_2 2^{l_2-1}$ and $b=\delta(\epsilon_12^{l_1-t-1}-\epsilon_22^{l_2-t-1}$).
Also $det(N)= a^2-b^2v^2=\epsilon_1\epsilon_22^{l_1+l_2}$.

 So we obtain $det(Id-N)= \epsilon_1\epsilon_2 2^{l_1+l_2}-\epsilon_1 2^{l_1}-\epsilon_2 2^{l_2}+1$\\
and  $-det(Id-MN)= \epsilon_1\epsilon_2 2^{l_1+l_2}+\epsilon_12^{l_1}-\epsilon_2 2^{l_2}-1$. It is not difficult to see that the
rational numbers, up to multiplication by a power of 2(positive or negative) are  the integers
$  2^{|l_1|+|l_2|}-\epsilon_22^{|l_1|}-\epsilon_1 2^{|l_2|}+1$\\
and  $ 2^{|l_1|+|l_2|}+\epsilon_22^{|l_1|}-\epsilon_1 2^{|l_2|}-1$, respectively. 
Whenever these positive integers are odd they are  the  Reidemeister number of the correspondent matrices,
which happens for $|l_1|,|l_2|>0$. Then in this case the Reidemeister number is the sum equals to 
$2^{|l_2|+1}(2^{|l_1|}-\epsilon_1)$. In order to find the complete spectrum we have to analyze the particular cases.
Suppose that $l_1=l_2=0$. Then in this case we have four possibilities for the pair $\epsilon_1,\epsilon_2$.
By straightforward calculation for each case either $det(Id-N)$ or $det(Id-MN)$ is zero (if not both) and we obtain Reidemeister
infinite. In details, for $(\epsilon_1,\epsilon_2)=(1,1)$ then $(det(Id-N), -det(Id-MN))=(0,0)$, 
for $(\epsilon_1,\epsilon_2)=(1,-1)$ then $(det(Id-N), -det(Id-MN))=(0,0)$, for $(\epsilon_1,\epsilon_2)=(-1,1)$ then $(det(Id-N), -det(Id-MN))=(0,-4)$ for $(\epsilon_1,\epsilon_2)=(-1,-1)$ then $(det(Id-N), -det(Id-MN))=(4,0)$  

Now  let $l_2=0$ and $l_1\ne 0$. By direct inspection for $\epsilon_2=1$ we obtain $det(N)=0$ and for 
$\epsilon_2=-1$ we obtain $det(MN)=0$, hence  we obtain Reidemeister infinite.

Finally  let $l_1=0$ and $l_2\ne 0$. If $\epsilon_1=1$ then $det(Id-N)=det(Id-MN)=0$ and Reidemeister is infinite. 
If $\epsilon_1=-1$ we get $det(Id-N)=-2(\epsilon_22^{|l_2|}-1)$ and  $det(Id-MN)=-2(\epsilon_22^{|l_2|}+1)$. After we compute $v_{P}$ of these numbers
and add them up we obtain the Reidemeister number $2^{|l_2|+1}$ for $|l_2|\geq 1$ or $2^k$ $k\geq 2$. But these numbers were obtained  already 
in  previous cases  and the result follows.

 Let us consider the case b).  Some of the calculations  are  similar and in this case we do not give all  details.  In this case the system of equations provide
$a=-d$. So the determinant of $N$ becomes $-a^2-bc=-a^2-b^2v^2=$ which is an invertible element. Since $uv=-1$ follows that  
$u=-\delta 2^{-t}$ and  $v=\delta 2^{t}$ for some integer $t$ and $\delta \in \{1,-1\}$.   

 The matrix $N$ is of the form 

$$
\begin{pmatrix}
a& b\\
bv^2& -a
\end{pmatrix}
$$
 \noindent and the matrix $MN$ is

$$
\begin{pmatrix}
-bv& a/v\\
av& bv
\end{pmatrix}.
$$

\noindent Our task is to commute $v_{P}$ of $det(Id-N)$ and $det(Id-MN)$. We have   $det(Id-N)=1-a^2-b^2v^2$ and 
 $det(Id-MN)=1-b^2v^2-a^2$. So we compute  $v_P$ of $det(Id-N)$.  

 Let $a=r2^m$ and $bv=s2^n$ where $r,s$ are odd numbers (possible negative) and $m,n$ integers. Because $a^2+(bv)^2$ is invertible
then we obtain that $r^2+s^2$ is necessarily 1 and we obtain as possible solutions  $a=\epsilon_12^m$ and $bv=0$ (or $b=0$ since $v\ne 0$)
 $a=0$ and $bv=\epsilon_2 2^n$. 

 For the case  $b=0$  we obtain $det(Id-N)= 1-a^2=1-2^{2m}$. For $m=0$ we obtain Reidemeister infinite otherwise we obtain
the total Reidemeister number $2(2^{2m}-1) \ for \  m>0$. For $a=0$ we  obtain $det(Id-N)= 1-b^2v^2=1-2^{2n}$. Then we obtain 
the same numbers as above and the result follows. 
 \end{proof}

The example studied  by Jabara in \cite{Jab}  is included in part a) above. Moreover, part a) above computes the Reidemeister spectrum 
of  the group.

For an arbitrary prime $p\ne 2$ we will have similar results.

\begin{proposition} Let  $\theta(1)$ be of the form:
$$
\begin{pmatrix}
r& 0 \\
0 & s
\end{pmatrix}
$$ Then we have the following cases:

a) If  $r=s=\pm 1$ then  
$Spec(\mathbb Z[1/p]^2\rtimes_{\theta} \mathbb Z)=\{2n |  n\in \mathbb N, \ (n,p)=1    \}\cup \{\infty\}$ where $(n,p)$   denote the gcd of $n$ and $p$.

b) If $r=-s=\pm 1$ then  the   Reidemeister spectrum of the group $\mathbb Z[1/p]^2\rtimes_{\theta}\mathbb Z$ is 
$Spec( \mathbb Z[1/p]^2\rtimes_{\theta}\mathbb Z)=\{ 2p^l(p^k\pm 1), 4p^l | l, k>0\} \cup\{  \infty\}$

c) If  $rs=1$ and $|r|\ne 1$      
then   $Spec(\mathbb Z[1/p]^2\rtimes_{\theta}\mathbb Z)=\{ 2(p^l \pm 1), 4 |l>0 \} \cup\{ \infty \}$

d) If either $r$ or $s$        does not have module equal to one, and $ rs \ne 1$ then 
 there is no automorphism of the group 
such that the induced on the quotient is multiplication by $-1$ and follows that  $Spec(\mathbb Z[1/p]^2\rtimes_{\theta}\mathbb Z)=\{\infty\}$.
\end{proposition}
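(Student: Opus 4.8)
The plan is to repeat the argument of Proposition 3.5, the diagonal case for $p=2$, adapting the arithmetic from $p=2$ to a general odd prime. The only structural change is that now $2$ is coprime to $p$, so $v_P(2)=2$; this single fact is what separates the present formulas from those for $p=2$.

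First I would invoke the reduction set up at the start of the section. By Lemma 2.1 the subgroup $\mathbb Z[1/p]^2$ is characteristic in $G=\mathbb Z[1/p]^2\rtimes_\theta\mathbb Z$, so every automorphism $\phi$ descends to $\bar\phi=\pm1$ on the quotient $\mathbb Z$; when $\bar\phi=\Id$ one has $R(\phi)=\infty$, so $\infty$ belongs to the spectrum in all four cases. Every finite value comes from an automorphism with $\bar\phi=-1$, and by Proposition 3.3 such a $\phi$ exists exactly when there is an invertible $N$ over $\mathbb Z[1/p]$ with $N=MNM$, where $M=\theta(1)$. For such $\phi$ the addition formula of \cite{go:nil1} together with Lemma 2.4 yields
\[
R(\phi)=\#\Coker(\Id-N)+\#\Coker(\Id-MN)=v_P(\det(\Id-N))+v_P(\det(\Id-MN)),
\]
each summand being finite precisely when the corresponding determinant is non-zero. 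So the whole problem reduces to solving $N=MNM$ and evaluating $v_P$ of these two determinants as $N$ runs over the admissible invertible matrices.

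Writing $N=\left(\begin{smallmatrix}a&b\\c&d\end{smallmatrix}\right)$ and $M=\diag(r,s)$, the equation $N=MNM$ becomes $a=r^2a$, $b=rsb$, $c=rsc$, $d=s^2d$, and since $r,s$ are units they equal $\pm p^k$; this is exactly the split into the four stated cases. In case (d), $|r|\neq1$ (or $|s|\neq1$) together with $rs\neq1$ forces enough entries to vanish that no invertible $N$ survives, so the spectrum is $\{\infty\}$. In case (a), $M=\pm\Id$ is central, every invertible $N$ solves the equation, and the spectrum is read off from Proposition 3.4 as in Proposition 3.5(a): for $M=\Id$ one has $R(\phi)=2\,v_P(\det(\Id-N))$, realising precisely $\{2n:(n,p)=1\}$. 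The computational heart is in cases (b) and (c). In case (b), $r=-s=\pm1$ forces $N=\diag(a,d)$ with $a,d$ units, so (taking $r=1$, $s=-1$) the two determinants become $(1-a)(1-d)$ and $(1-a)(1+d)$; using that $v_P$ is multiplicative, that $v_P(1-\epsilon p^k)\in\{p^k-1,\,p^k+1\}$ for $k>0$, that $v_P(1-d)+v_P(1+d)=2p^{|j|}$ for $d=\pm p^{j}$, and that $v_P(2)=2$, one collects exactly the values $2p^l(p^k\pm1)$ and $4p^l$. In case (c), $rs=1$ with $|r|\neq1$ forces $N$ antidiagonal with units on the antidiagonal, and since $rs=1$ the two determinants coincide and equal $1-bc$, so $R(\phi)=2\,v_P(1-bc)$ with $bc=\pm p^{w}$, producing $2(p^l\pm1)$ and $4$. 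Throughout one must isolate the degenerate exponents and sign choices (exponent $0$, or a unit equal to $\pm1$) for which a determinant vanishes: these contribute $\infty$ rather than a finite number and are what pin down the ranges $l,k>0$ in the statement.

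The main obstacle is not a single identity but the bookkeeping needed to make the case analysis exhaustive in both directions, namely to show that every admissible $N$ gives a value in the claimed set and that every element of that set is realised, while carefully excluding the configurations that collapse a cokernel to an infinite group. The genuinely odd-prime feature to track is $v_P(2)=2$, which (unlike for $p=2$) injects an extra factor $2$ and thereby produces the $4$ and $4p^l$ entries; getting these multiplicities right, rather than any deep structural input, is where the care lies.
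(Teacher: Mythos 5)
Your proposal follows essentially the same route as the paper's proof: the same reduction to $R(\phi)=\#\Coker(\Id-N)+\#\Coker(\Id-MN)$ over invertible solutions of $N=MNM$, the same four-way split coming from $a=r^2a$, $b=rsb$, $c=crs$, $d=s^2d$, and the same $v_P$-arithmetic in cases (b) and (c), where your factorization $(1-a)(1\mp d)$ plus multiplicativity of $v_P$ is just a tidier bookkeeping of the paper's direct expansion and yields exactly the paper's values $2p^l(p^k\pm 1)$, $4p^l$, $2(p^l\pm1)$ and $4$. The only caveat, which you share with the paper itself, is that case (a) is verified explicitly only for $\theta(1)=\Id$ (where $R(\phi)=2\,v_P(\det(\Id-N))$ and Proposition 3.4 applies), while for $\theta(1)=-\Id$ the quantity $v_P(\det(\Id-N))+v_P(\det(\Id+N))$ is not literally twice a single $v_P$, a point the paper also passes over by deferring to Propositions 3.4 and 3.5(a).
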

\begin{proof} As in Proposition 3.5 we have the system  $a=ar^2$,   $b=brs$, $c=crs$ and $d=ds^2$. Part a) and d) follows as in 
Proposition 3.5.

For the part b) from the equations  $a=ar^2$,   $b=brs$, $c=crs$ and $d=ds^2$ follows  that the matrix $N$ is diagonal. Let
$a=\epsilon_1p^{l_1}$ and $d=\epsilon_2p^{l_2}$ since these elements are invertible. The $det(Id-N)=1-a-d+ad=\epsilon_1\epsilon_2p^{l_1+l_2}-\epsilon_1p^{l_1}-\epsilon_2p^{l_2}+1$ and 
$-det(Id-MN)=ad+a-d-1=\epsilon_1\epsilon_2p^{l_1+l_2}+\epsilon_1p^{l_1} -\epsilon_2p^{l_2}-1$.

Without loss of generality let us assume that $l_1,l_2 \geq 0$.
First let  $l_1=l_2=0$. Then  one of the two determinants is zero and the Reidemeister number is infinite. 
 Now   let $l_1=0$ and $l_2\ne 0$.  We have 
 $det(Id-N)=\epsilon_1\epsilon_2p^{l_2}-\epsilon_1-\epsilon_2p^{l_2}+1$ and 
$-det(Id-MN)=\epsilon_1\epsilon_2p^{l_2}+\epsilon_1 -\epsilon_2p^{l_2}-1$. If $\epsilon_1=1$ then $det=0$ 
in both cases and we 
have Reidemeister infinite. If $\epsilon_1=-1$ then we obtain 
 $det(Id-N)=-2\epsilon_2p^{l_2}+2$ and 
$-det(Id-MN)=-2\epsilon_2p^{l_2}-2$. Both numbers are not divisible by $p$ and the  Reidemeister number is 
the module of $4\epsilon_2 p^{l_2}$,  $l_2>0$.   Now   let $l_2=0$ and $l_1\ne 0$.  We have 
 $det(Id-N)=\epsilon_1\epsilon_2p^{l_1}-\epsilon_2-\epsilon_1p^{l_1}+1$ and 
$-det(Id-MN)=\epsilon_1\epsilon_2p^{l_1}+\epsilon_1p^{l_1} -\epsilon_2-1$. If $\epsilon_2=1$ then $det(Id-N)=0$ and we 
have Reidemeister infinite. If $\epsilon_2=-1$ then $det(Id-MN)=0$ and we 
have Reidemeister infinite.  Finally if $l_1, l_2>0$ then the two numbers 
$\epsilon_1\epsilon_2p^{l_1+l_2}-\epsilon_1p^{l_1}-\epsilon_2p^{l_2}+1$ and 
$\epsilon_1\epsilon_2p^{l_1+l_2}+\epsilon_1p^{l_1} -\epsilon_2p^{l_2}-1$ 
are not divisible by $p$ and the Reidemeister number is the module of 
$2\epsilon_1\epsilon_2p^{l_1+l_2} -2\epsilon_2p^{l_2}=2\epsilon_2p^{l_2}(\epsilon_1p^{l_1}-1)$. 
So the Reidemeister numbers are of the form  $2p^l(p^k\pm 1)$, $l,k>0$,  and the result follows.

For the part c)  from the equations  $a=ar^2$,   $b=brs$, $c=crs$ and $d=ds^2$ follows $a=d=0$. So the matrix $N$ 
 is of the form 

$$
\begin{pmatrix}
0& b\\
c& 0
\end{pmatrix}.
$$

\noindent  and  $MN$ is 

$$
\begin{pmatrix}
0& rb\\
sc & 0
\end{pmatrix}.
$$

\noindent By direct calculation, using that $rs=1$,  follows that $det(Id-N)=det(Id-MN)=1-bc$.
Since $bc$ is invertible then we can write $b=\epsilon_1p^{l_1}$ and  $c=\epsilon_2p^{l_2}$
and follows that $det(Id-N)=1-\epsilon_1\epsilon_2p^{l_1+l_2}$. If $l_1+l_2=0$ then we obtain 
for the determinant 0 or 2. So we obtain for Reidemeister number infinite and 4. If $l_1+l_2\ne 0$ then we obtain as Reidemeister numbers
the numbers of the form $2(p^l \pm 1),   l>0$ and the result follows.

\end{proof}

\begin{proposition} Let  $\theta(1)$ be of the form:
$$
\begin{pmatrix}
0 & u \\
v & 0
\end{pmatrix}
$$ Then we have the following cases: 

a) If $ uv=1$     then    the  Reidemeister spectrum of the group $\mathbb Z[1/p]^2\rtimes_{\theta}\mathbb Z$ is 
$Spec(\mathbb Z[1/p]^2\rtimes_{\theta}\mathbb Z)=\{ 2 p^l(p^l\pm 1) | l>0\} \cup\{\infty \}$.\\

b) If $ uv=-1$ and then   the  Reidemeister spectrum of the group $\mathbb Z[1/p]^2\rtimes_{\theta}\mathbb Z$ is 
$Spec(\mathbb Z[1/p]^2\rtimes_{\theta}\mathbb Z)=\{2 (p^l\pm 1) | l>0 \} \cup\{\infty \}$.\\

c) If   $u^2v^2\ne  1$ then    there is no automorphism of the group such that the induced on the quotient is multiplication by $-1$ and follows that  $Spec(\mathbb Z[1/p]^2\rtimes_{\theta}\mathbb Z)=\{\infty\}$.
 \end{proposition}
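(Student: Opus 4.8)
The plan is to follow the template of the proof of Proposition 3.6 (the $p=2$ analogue), adapting the arithmetic to an odd prime. Since the identity-induced automorphism already forces $\infty$ into the spectrum, the real work is the finite part. By Proposition 3.3 together with the reduction at the beginning of this section, the finite values come from automorphisms $\phi$ whose induced map on the quotient $\mathbb{Z}$ is multiplication by $-1$. For such a $\phi$ the restriction $\phi'$ to $\mathbb{Z}[1/p]^2$, with matrix $N$, must satisfy $\phi'\circ\theta(1)=\theta(-1)\circ\phi'$, i.e. $MNM=N$ with $M=\theta(1)$, and then
$R(\phi)=R(\phi')+R(\theta(1)\circ\phi')=v_P(\det(\mathrm{Id}-N))+v_P(\det(\mathrm{Id}-MN))$ by Lemma 2.4, each summand being $\infty$ exactly when its determinant vanishes. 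Thus everything reduces to enumerating the invertible $N$ solving $MNM=N$ and evaluating $v_P$ of the two determinants.

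First I would solve $MNM=N$ for $N=\begin{pmatrix} a & b \\ c & d\end{pmatrix}$. As in Proposition 3.6 this gives $a=uvd$, $b=u^2c$, $c=v^2b$, $d=uva$, so every entry is fixed by multiplication by $(uv)^2$. If $u^2v^2\neq 1$ this forces $N=0$, which is not invertible, so no admissible automorphism exists and the spectrum is $\{\infty\}$; this settles case (c). When $uv=1$ one gets $d=a$ and $c=v^2b$, and when $uv=-1$ one gets $d=-a$ and $c=v^2b$, cutting $N$ down to a two-parameter family in each case.

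For case (a) the heart of the computation is the pair of factorizations: writing the invertible quantities $a+bv=\epsilon_1 p^{l_1}$ and $a-bv=\epsilon_2 p^{l_2}$ (so $\epsilon_i=\pm1$, $l_i\in\mathbb{Z}$), a direct calculation gives
\[
\det(\mathrm{Id}-N)=(1-\epsilon_1 p^{l_1})(1-\epsilon_2 p^{l_2}),\qquad \det(\mathrm{Id}-MN)=(1-\epsilon_1 p^{l_1})(1+\epsilon_2 p^{l_2}).
\]
Since $v_P$ is multiplicative and $v_P(1\mp\epsilon p^{l})=p^{|l|}\mp\epsilon$ for $l\neq 0$, summing collapses the second factors and yields $R=2p^{|l_2|}\,v_P(1-\epsilon_1 p^{l_1})$ whenever $l_2\neq 0$. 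The boundary configurations must then be dispatched one at a time: $l_2=0$ makes one determinant vanish ($R=\infty$); $(\epsilon_1,l_1)=(1,0)$ kills $\det(\mathrm{Id}-N)$; and $(\epsilon_1,l_1)=(-1,0)$ contributes $v_P(2)=2$. It is precisely this last value that makes the odd-$p$ answer differ from Proposition 3.6, where $v_P(2)=1$; tracking it carefully is what separates the two formulas.

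For case (b) the matrix becomes $N=\begin{pmatrix} a & b \\ bv^2 & -a\end{pmatrix}$, and one computes $\det(\mathrm{Id}-N)=\det(\mathrm{Id}-MN)=1-a^2-b^2v^2$. Invertibility of $N$ forces $a^2+(bv)^2$ to be a unit of $\mathbb{Z}[1/p]$, hence a power of $p$, so $\det(\mathrm{Id}-N)=1-p^{j}$ and $R=2\,v_P(1-p^{j})$ for the admissible exponents. The main obstacle of the whole proof sits here: determining exactly which $p^{j}$ are represented as $a^2+(bv)^2$ with $a,bv$ not both divisible by $p$. Unlike $p=2$, this is genuinely sensitive to $p\bmod 4$, since that is what decides whether solutions with both $a\neq 0$ and $bv\neq 0$ exist; so isolating the set of achievable $j$ is where the argument becomes delicate. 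Once that representation question is resolved and combined with the vanishing analysis above, the remaining $v_P$-arithmetic is routine, exactly as illustrated in Proposition 3.6.
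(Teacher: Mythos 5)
Your reduction (invertible $N$ solving $MNM=N$, then $R=v_P(\det(\mathrm{Id}-N))+v_P(\det(\mathrm{Id}-MN))$, each summand infinite exactly when the determinant vanishes) and your case (c) coincide with the paper's proof. The problems are in cases (a) and (b), and they are fatal to the proposal as a proof of the proposition \emph{as stated}.

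In case (a) your factorizations $\det(\mathrm{Id}-N)=(1-\epsilon_1p^{l_1})(1-\epsilon_2p^{l_2})$ and $\det(\mathrm{Id}-MN)=(1-\epsilon_1p^{l_1})(1+\epsilon_2p^{l_2})$ are correct, and your enumeration over independent $(\epsilon_1,l_1,\epsilon_2,l_2)$ is indeed the template of Proposition 3.6(a). But the paper's proof of the present proposition does \emph{not} follow that template: it inserts a parity step, asserting that $2a=\epsilon_1p^{l_1}+\epsilon_2p^{l_2}$ can be solved in $\mathbb{Z}[1/p]$ only if $l_1=l_2$; this collapses the admissible $N$ to those with $b=0$ or $a=0$ and is the only reason the stated answer has equal exponents $2p^l(p^l\pm 1)$. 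For odd $p$ that parity constraint is vacuous: up to a power of $p$, $\pm p^{l_1}\pm p^{l_2}$ is a sum of two odd integers, hence always lies in $2\mathbb{Z}[1/p]$. So your enumeration is the correct one --- and then your own formula $R=2p^{|l_2|}\,v_P(1-\epsilon_1p^{l_1})$ produces values outside the stated spectrum: $(\epsilon_1,l_1)=(-1,0)$ gives $4p^{|l_2|}$, and $l_1\ne l_2$ both nonzero gives $2p^{|l_2|}(p^{|l_1|}\pm1)$. Concretely, for $p=3$, $M=\left(\begin{smallmatrix}0&1\\1&0\end{smallmatrix}\right)$, $N=\left(\begin{smallmatrix}6&-3\\-3&6\end{smallmatrix}\right)$ solves $MNM=N$, has unit determinant $27$, and yields $R=v_P(16)+v_P(-20)=36=4\cdot 3^2$, which is not of the form $2\cdot 3^l(3^l\pm1)$. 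Carried to its end, your computation therefore contradicts the proposition rather than proving it; the proposal never confronts this, and the gap cannot be closed, because the constraint it would need ($l_1=l_2$) simply does not hold for odd $p$.

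In case (b) the gap is explicit: you leave open exactly which powers $p^j$ are represented as $a^2+(bv)^2$, and without settling that question there is no proof. Your instinct that this depends on $p\bmod 4$ is sound: the paper settles it by asserting that invertibility forces $a=0$ or $b=0$ (its claim that ``$r^2+s^2$ is necessarily $1$''), which is valid only when $-1$ is a quadratic non-residue mod $p$; for $p\equiv 1\pmod 4$ it fails, e.g.\ $1^2+2^2=5$ is a unit of $\mathbb{Z}[1/5]$. But note also that no resolution of the representation question can deliver the statement as written: since $a^2+(bv)^2$ is a positive unit, $\det(\mathrm{Id}-N)=1-p^j$ always, so every finite Reidemeister number has the form $2(p^j-1)$, and the values $2(p^l+1)$ listed in the statement never occur (the paper's own computation for (b) in fact ends with the set $\{2(p^{2m}-1)\}$, not the stated one). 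So completing your plan for (b) would likewise not yield the proposition as written.
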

\begin{proof} The proof follows the same steps as the proof of Proposition 3.6 and it is simpler. The matrix $N$ 
$$
\begin{pmatrix}
a& b\\
c& d
\end{pmatrix}
$$

\noindent must satisfy the equations $uvd=a, u^2c=b, bv^2=c, auv=d$. It follows that $a=(uv)^2a$,  $d=(uv)^2d$, $b=(uv)^2b$, $c=(uv)^2c$.
So part c) follows promptly from these equations.  

 Let us consider the case a).  In this case the system of equations provide
$a=d$. So the determinant of $N$ becomes $a^2-bc=a^2-b^2v^2=(a+bv)(a-bv)$ which is an invertible element, so $(a+bv)$ and $(a-bv)$ are also 
invertible. Since $uv=1$ follows that  $u=\delta p^{-t}$ and  $v=\delta p^{t}$ for some integer $t$ and $\delta \in \{1,-1\}$.   

 The matrix $N$ is of the form 

$$
\begin{pmatrix}
a& b\\
bv^2& a
\end{pmatrix}
$$
 \noindent and the matrix $MN$ is

$$
\begin{pmatrix}
bv& a/v\\
av& bv
\end{pmatrix}.
$$

\noindent Our task is to compute $v_{P}$ of $det(Id-N)$ and $det(Id-MN)$. We have   $det(Id-N)=1-2a+a^2-b^2v^2$ and 
 $det(Id-MN)=1-2bv+b^2v^2-a^2$ so  $-det(Id-MN)=-1+2bv+a^2-b^2v^2$. We compute $v_{P}$ of  $-det(Id-MN)$ which is 
the same as $v_P$ of  $det(Id-MN)$.

 Since $a+bv$ and $a-bv$ are  invertible they are of the form $a+bv=\epsilon_1p^{l_1}$ and   $a-bv=\epsilon_2p^{l_2}$.
Therefore $2a=\epsilon_1 p^{l_1} +\epsilon_2 p^{l_2}$ and $2bv=\epsilon_1p^{l_1}-\epsilon_2p^{l_2}$.
In order to have $\epsilon_1 p^{l_1} +\epsilon_2 p^{l_2}$ divisible by 2 we need to have $l_1=l_2=l$ and 
either $\epsilon_1=\epsilon_2$ or  $\epsilon_1=-\epsilon_2$. In the first case we have $a=\epsilon_1 p^l$ and $2bv=0$ so $b=0$.
In the latter case we have $a=0$ and $bv=\epsilon_1s^l$. Now we compute the Reidemeister for each of these two cases. Let 
 $a=\epsilon_1 p^l$ and  $b=0$. Then $det(Id-N)=1-2a+a^2$ and  $-det(Id-MN)=a^2-1$. If $l=0$ then $det(Id-MN)=0$ and 
we get Reidemeister infinite. If $l\ne 0$ then the determinants are not divisible by $p$ and we  get as Reidemister number
the module of 
$2a^2-2a=2a(a-2)=2\epsilon p^l(\epsilon p^l-1)$. So the Reidemeister numbers are of the form $2 p^l(p^l\pm 1)$, $l\ne 0$.
So the result follows.

 Let us consider the case b).  Some of the calculations are  similar to the corresponding  case ofthe  Proposition 3.6   and in this case 
we do not give the details.  The system of equations provide
$a=-d$. So the determinant of $N$ becomes $-a^2-bc=-a^2-b^2v^2$ which is an invertible element. Since $uv=-1$ follows that  
$u=-\delta 2^{-t}$ and  $v=\delta 2^{t}$ for some integer $t$ and $\delta \in \{1,-1\}$.   

 The matrix $N$ is of the form 

$$
\begin{pmatrix}
a& b\\
bv^2& -a
\end{pmatrix}
$$
 \noindent and the matrix $MN$ is

$$
\begin{pmatrix}
-bv& a/v\\
av& bv
\end{pmatrix}.
$$

\noindent Our task is to commute $v_{P}$ of $det(Id-N)$ and $det(Id-MN)$. We have   $det(Id-N)=1-a^2-b^2v^2$ and 
 $det(Id-MN)=1-b^2v^2-a^2$. So we compute  $v_P$ of $det(Id-N)$.  

 Let $a=rp^m$ and $bv=sp^n$ where $r,s$ are relatively prime with $p$. Because $a^2+(bv)^2$ is invertible
then we obtain that $r^2+s^2$ is necessarily 1 and we obtain as  possible solution  $a=\epsilon_1p^m$ 
and $bv=0$ (or $b=0$ since $v\ne 0$),  or 
 $a=0$ and $bv=\epsilon_2 p^n$. 

 For the case  $b=0$  we obtain $det(Id-N)= 1-a^2=1-p^{2m}$. For $m=0$ we obtain Reidemeister infinite otherwise we obtain
the total Reidemeister number $2(p^{2m}-1), m>0$. For $a=0$ we  obtain $det(Id-N)= 1-b^2v^2=1-p^{2n}$. Then we obtain the same numbers 
as above and the result follows. 

 \end{proof}

\section{The semi-direct products  $\mathbb Q^n\rtimes Z$, $\mathbb Z[1/p]^n\rtimes Z$,  $n> 2$}

There are some  of the above results that extend easily  to the groups $\mathbb Q^n\rtimes Z$, $\mathbb Z[1/p]^n\rtimes \mathbb Z$,  $n> 2$.
One of the results refer to the group  $\mathbb Q^n\rtimes \mathbb Z$. The abelian group $\mathbb Q^n$  has the property that it does not have a 
subgroup of finite index.
Then   an immediate  consequence of this fact    is that the   Reidemeister number of any homomorphism is either 1
or infinite. So  the following  result holds.

\begin{proposition}  The    Reidemeister spectrum of $\mathbb Q^n\rtimes_{\theta}\mathbb Z$ is either $\{\infty\}$ or $\{2\}\cup\{\infty\}$.
The $Spec(\mathbb Q^n\rtimes_{\theta}\mathbb Z)=\{\infty\}$ if  there is no an invertible matrix $N$ over $\mathbb Q$
such that $N=MNM$ and $det(Id-N)$ and $det(Id-MN)$ are non zero. Here $\theta(1)=M$. Otherwise the 
Reidemeister spectrum is    $\{2\}\cup\{\infty\}$.
\end{proposition}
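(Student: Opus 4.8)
The plan is to run the same argument that established Proposition 3.2, observing that nothing there was special to $n=2$. By Lemma 2.1 the divisible subgroup $\mathbb Q^n$ is characteristic in $G=\mathbb Q^n\rtimes_\theta\mathbb Z$, so any automorphism $\phi$ restricts to an automorphism $\phi'$ of $\mathbb Q^n$ and induces on the quotient $\mathbb Z$ an automorphism $\bar\phi$, which is multiplication by $+1$ or $-1$. As recalled at the start of Section 3, when $\bar\phi$ is the identity the Reidemeister number is infinite; in particular the identity automorphism of $G$ has $R=\infty$, so $\infty$ always lies in the spectrum. Since the only other possibility is $\bar\phi=-1$, this already confines $Spec(G)$ to a subset of $\{2\}\cup\{\infty\}$ and reduces everything to analysing the case $\bar\phi=-1$.

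In that case I would first extract the algebraic constraint on $N$, the matrix of $\phi'$. Writing group elements as $(v,m)$ and applying $\phi$ to the conjugation identity $(0,1)(v,0)(0,1)^{-1}=(Mv,0)$, a short and routine computation yields $NM=M^{-1}N$, i.e. $N=MNM$, where $M=\theta(1)$. Conversely, for any invertible $N$ with $N=MNM$ the assignment $\phi(v,m)=(Nv,-m)$ is readily checked to be an automorphism lying over $-1$ (the translation part may be chosen freely and does not affect the Reidemeister count). Thus the automorphisms over $-1$ are parametrized by the invertible solutions of $N=MNM$.

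Finally I would apply the additivity formula from \cite{go:nil1}, namely $R(\phi)=R(\phi')+R(\theta(1)\circ\phi')=R(N)+R(MN)$, together with Lemma 2.4 for $A=\mathbb Q$: the cokernel of $Id-N$ (respectively $Id-MN$) has cardinality $1$ when the corresponding determinant is nonzero and is infinite otherwise. Hence each summand equals $1$ or $\infty$, so $R(\phi)=2$ exactly when both $det(Id-N)$ and $det(Id-MN)$ are nonzero, and $R(\phi)=\infty$ otherwise. Assembling the two cases yields the dichotomy: if some invertible $N$ with $N=MNM$ makes both determinants nonzero then the value $2$ is attained and $Spec(G)=\{2\}\cup\{\infty\}$, while if no such $N$ exists then every automorphism has infinite Reidemeister number and $Spec(G)=\{\infty\}$. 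The step I expect to require the most care is the equivalence in the previous paragraph---that invertibility together with $N=MNM$ is exactly the condition for $N$ to arise as the restriction of an automorphism over $-1$; beyond that, the essential input is Lemma 2.4 for $\mathbb Q$ (rather than $\mathbb Z[1/p]$), which collapses each summand to $1$ or $\infty$ and thereby excludes any spectral value other than $2$ and $\infty$.
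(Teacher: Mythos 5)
Your proposal is correct and follows essentially the same route as the paper's (sketched) proof: reduce via the characteristic subgroup $\mathbb Q^n$ and the results of \cite{go:nil1} to the case $\bar\phi=-1$, parametrize such automorphisms by invertible solutions of $N=MNM$, and use the fact that an endomorphism of $\mathbb Q^n$ has cokernel of cardinality $1$ or $\infty$ according to whether its determinant is nonzero, so that $R(\phi)=R(N)+R(MN)\in\{2,\infty\}$. You merely fill in details the paper leaves implicit (the derivation of $N=MNM$ and the converse construction of the automorphism), which is a faithful elaboration rather than a different argument.
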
 
\begin{proof}(sketch) The only possible automorphism which can have Reidemeister finite is one such that the induced homomorphism 
on  the quotient $\mathbb Z$ is multipliction by $-1$. From the procedure described in the begin of section 3 we have to compute 
the Reidemeister of the homomorphism given by $N$ and $MN$. But a homomorphism of $\mathbb Q^n$ is either surjective or has cokernel infinite. 
So the sum of the Reidemeister of the two homomorphisms is either infinite or 2. The case where both homomorphisms have Reidemeister
1 corresponds to say that $det(Id-N)$ and $det(Id-MN)$   are non zero and   the result follows. 
\end{proof}

It is easy  to construct examples which illustrated both cases, i.e. when  the    Reidemeister spectrum of $\mathbb Q^n\rtimes_{\theta}\mathbb Z$ is  $\{\infty\}$ 
and when it is  $\{2\}\cup\{\infty\}$.

Now let $p$ be an arbitrary prime. It is not difficult to construct action $\theta(1)$ which has diagonal matrix such that
there is no automorphism of the group  such that the induced automorphims on the quotient $\mathbb Z$ is  $-id$. This gives the  examples of groups with the $R_{\infty}$ property. 

\subsection{ Final comments}
 Further  calculation of the Reidemeister spectrum  for some of the groups of the form $A^n\rtimes_{\theta} Z$, with $A$ either $\mathbb Q$ or $\mathbb Z[1/p]$,
not studied above,  is in progress.

 It is interesting  to investigate
 $Spec_NC(X)= \{ N(f)| f \in Map(X) \}$ 
and   $Spec_NH(X)= \{ N(f)| f \in Homeo(X) \}$
  i.e., the {\it  Nielsen spectrum for continious maps of  the space} $X$
and  {\it  Nielsen spectrum for homeomorphisms  of  the space} $X$.
\begin{example}
 For selfmaps $f$ of nilmanifold, either $N(f)=0$ or $N(f)=R(f)$ \cite{fhw}.
 It is easy to see that  $Spec_NH(S^1) = \{2\} \cup \{0 \}$, 
and,  for $n \geq 2$,  the spectrum is of $T^n$ is  full, i.e.
$ Spec_NH(T^n) = {\mathbb N}\cup \{0 \}$.
Let $ Nil_{rl}$ be the nilmanifold with the fundamental group being  a  free nilpotent group of rank $r$ and class
$l$. It follows from Introduction that  for $Nil_{22}$ we have   $Spec_NH(Nil_{22}) = 2{\mathbb N} \cup \{0 \}$; for $Nil_{23}$ we have  $ Spec_NH(Nil_{23}) = \{ 2k^2 | k \in {\mathbb N}\} \cup
\{0 \}$  and for $Nil_{32}$ we have 
$ Spec_NH (Nil_{32}) = \{2n - 1| n \in {\mathbf N}\} \cup \{
4n | n \in {\mathbf N}\} \cup \{0 \}$ .

\end{example}

\bibliography{ref}
\bibliographystyle{amsplain}

\end{document}